\theoremstyle{plain}
\newtheorem{Theorem}{Theorem}[section]
\newtheorem{Proposition}[Theorem]{Proposition}
\newtheorem{Corollary}[Theorem]{Corollary}
\theoremstyle{Definition}
\newtheorem{Definition}{Definition}[section]
\theoremstyle{Remark}
\newtheorem{Remark}[Theorem]{Remark} 
\numberwithin{equation}{section}
\def\Vec#1{\mbox{\boldmath $#1$}}
\newcommand{\SL}{{\rm SL}_4 \mathbb R}
\newcommand{\gs}{g_2}
\newcommand{\gf}{g_1}
\newcommand{\js}{J_2}
\newcommand{\jf}{J_1}
\newcommand{\id}{\operatorname{id}}
\newcommand{\di}{\operatorname{diag}}
\newcommand{\offdi}{\operatorname{offdiag}}
\def\iddots{\mathinner{\mkern1mu\raise\p@
    \hbox{.}\mkern2mu\raise4\p@\hbox{.}\mkern2mu
    \raise7\p@\vbox{\kern7\p@\hbox{.}}\mkern1mu}}
\def\adots{\mathinner{\mkern2mu\raise\p@\hbox{.} 
 \mkern2mu\raise4\p@\hbox{.}\mkern1mu
 \raise7\p@\vbox{\kern7\p@\hbox{.}}\mkern1mu}}
\begin{document}
\title{The Gauss maps of Demoulin surfaces with conformal coordinates}

\author[J.~Inoguchi]{Jun-ichi Inoguchi}
\address{
Institute of Mathematics, 
University of Tsukuba, 
Tsukuba, 305-8571, Japan}
\email{inoguchi@math.tsukuba.ac.jp}
\thanks{The first named author is partially supported by JSPS Kakenhi 
Grant Number JP19K03461}
\author[S.-P.~Kobayashi]{Shimpei Kobayashi}
\address{Department of Mathematics, 
Hokkaido University, Sapporo, 
060-0810, Japan}
\email{shimpei@math.sci.hokudai.ac.jp}
\thanks{The second named author is partially supported by JSPS Kakenhi 
Grant Number JP18K03265}

\dedicatory{In Memory of Professor Zhengguo Bai {\rm(1916}-{\rm2015)}}

\subjclass[2010]{Primary~53A20, Secondary~53C43, 37K10}
\keywords{Demoulin surface, Wilczynski frame, Gauss map}

\begin{abstract}
Demoulin surfaces in real projective $3$-space are investigated.
Our result enable us to establish a 
generalized Weierstrass type representation for 
definite Demoulin surfaces by virtue of 
primitive maps into a certain
semi-Riemannian $6$-symmetric space.
\end{abstract}
\maketitle

\section*{Introduction}Professor Zhengguo Bai have done great contributions 
in projective differential geometry. 
For example, he solved the so-called Fubini's problem~\cite{Pa1956} 
(\textit{cf} \cite{Su1983}).

Projective differential geometry of surfaces is a treasure box 
of infinite dimensional integrable systems.
For instance, 
harmonic maps of Riemann surfaces 
into complex projective space  $\mathbb{CP}^n$ 
(the $\mathbb{CP}^n$-sigma models in particle 
physics) are typical examples of $2$-dimensional 
integrable systems. One of the key clue of the 
study of harmonic maps into complex projective space
is the use of \emph{harmonic sequences} 
introduced by Chern and Wolfson~\cite{CW}. 
It should be emphasized that the basic 
idea of harmonic sequence goes back to 
\emph{Laplace sequence} in classical projective differential geometry, see~\cite{BW1992}. 

From modern point of view, the Laplace sequence produces 
$2$-dimensional Toda field equation of type $\mathrm{A}_{\infty}$, 
see~\cite{Darboux, Sasaki}.
In particular, the periodic Laplace sequence produces 
$2$-dimensional periodic Toda field equations. 
For example, Laplace sequences of period $2$ 
produce sinh-Gordon equation. 
\emph{\c{T}i\c{t}eica equation} is 
obtained as Laplace sequence of period $3$, and it
 is a structure equation of affine spheres~\cite{DFKW}. 
Laplace sequences of period $4$ were studied by Su~\cite{Su1936, Su1964}. 
Hu gave a Darboux matrix, that is,
the simple type dressing for such a sequence~\cite{Hu}.

This article addresses Laplace sequences of period $6$. 
The Toda field equation derived from those sequences   
is a structure equation of \emph{Demoulin surfaces} in 
real projective $3$-space $\mathbb{R}\mathbb{P}^3$. 

Godeaux gave a method for studying projective surfaces
through their Pl{\"u}ker images in real projective $5$-space $\mathbb{R}\mathbb{P}^5$.
His method relies on the consideration of the Laplace 
sequence associated with the Pl{\"u}ker image, called the 
\emph{Godeaux sequence}. For a characterization of Demoulin surfaces
in terms of Godeaux sequences, see~\cite[\S 4.8]{Sasaki}.
Bai~\cite{Pa} studied Godeaux sequences of quadrics.
 
In~\cite{Kobayashi2015}, the second named author 
considered two Gauss maps of surfaces in $\mathbb{R}\mathbb{P}^3$
with indefinite \emph{projective metric} and characterized
projective minimal surfaces and Demoulin surfaces in terms of
harmonicities of the Gauss maps.
In this paper, we consider those surfaces with \emph{positive definite} projective metric. This paper is organized as follows:
After preparing prerequisite knowledge on projective surface theory in Sections \ref{sc:surfacetheory}-\ref{sc:PMD}, 
we parametrize the space of all conformal $2$-spheres in 
$\mathbb{R}\mathbb{P}^3$ in Section \ref{sec:Plucker}. 
We will show that 
the space of all conformal $2$-spheres is realized as a 
semi-Riemannian symmetric space. The Gauss maps introduced in 
this paper take values in this symmetric space.  
In Section \ref{sc:1stGauss}, we introduce the first-order Gauss map for 
a surface in $\mathbb{R}\mathbb{P}^3$ as a congruence of conformal 
$2$-spheres which has the first-order contact to the surface. Definite 
Demoulin surfaces are characterized as surfaces with conformal first order Gauss map.
In addition definite Demoulin surfaces and definite projective minimal coincidence surfaces 
are characterized by harmonicity of first order Gauss map. 
In the final section we will show that every definite Demoulin surface can be 
constructed by a primitive map into certain semi-Riemannian $6$-symmetric space fibered over the semi-Riemannian symmetric space
of all conformal $2$-spheres. 

\medskip  
  
Throughout this paper, we use the following abbreviation:
\[
\mathrm{diag} (a_1, a_2, \cdots, a_n) 
= \begin{pmatrix} 
 a_1  &&& \\& a_2 &&\\ &&\ddots&\\  &&&a_n 
 \end{pmatrix}, \quad
\mathrm{offdiag} (a_1, a_2, \cdots, a_n) = \begin{pmatrix} 
  &&&a_1 \\&& a_2&\\ & \iddots & &\\  a_n &&& 
 \end{pmatrix}.
\]
  
\section{Projective surface theory}\label{sc:surfacetheory}

Let $\mathfrak{f}:M\to \mathbb{R}\mathbb{P}^3$ be an immersed surface in the real projective 
$3$-space $\mathbb{R}\mathbb{P}^3$. Take a
simply connected region $\mathbb{D} \subset M$ and homogeneous 
coordinate vector field $f=(f^0,f^1,f^2,f^3):
\mathbb{D}\to\mathbb{R}^{4}\setminus\{\Vec{0}\}$. 
Let $D$ be the natural affine connection on $\mathbb{R}^4$ and 
$\Omega$ a volume element so that $D\Omega=0$. Thus 
$(\mathbb{R}^4,D,\Omega)$ is an \emph{equiaffine $4$-space}. 
One can take a vector field $\xi$ transversal to both 
$f$ and the radial vector field $\zeta=\sum_{i=0}^{3}
x^{i}\partial/\partial{x^i}$. 
Then $\xi$ induces an affine connection $\nabla$ on 
$\mathbb{D}$ and symmetric tensor fields $h$ and $T$ via the 
\emph{Gauss formula}:
\[
D_{X}f_{*}Y=f_{*}(\nabla_{X}Y)+h(X,Y)\xi+
T(X,Y)\zeta,
\quad 
X,Y\in\varGamma(T\mathbb{D}).
\]
Moreover we have the following \emph{Weingarten formula}:
\[
D_{X}\xi=-f_{*}(SX)+\tau(X)\xi+\rho(X)\zeta.
\]
The triplet $(\mathbb{D},f,\xi)$ is a 
\emph{centroaffine surface} (of codimension $2$) in $\mathbb{R}^4$ in the sense of~\cite{NS, NSBook}. 
We introduce an area element $\vartheta$ on $\mathbb{D}$ by 
\[
\vartheta(X,Y)=\Omega(f_{*}X,f_{*}Y,\xi,\zeta).
\]%
The \emph{cubic form} $C$ is defined by
\[
C=\nabla h+\tau\otimes h.
\]

The non-degeneracy of $h$ is independent of the choice of 
$\xi$. In addition, the conformal class $[h]$ of $h$ is independent of 
$\xi$. Thus the property 
``$h$ is \emph{positive definite}" is well defined for $f$. 
Throughout this article, we assume that $h$ is positive definite.

When we take $\xi$ so that $\tau=0$, then 
$(\mathbb{D},f,\xi)$ is said to be \emph{equiaffine}.
An equiaffine centroaffine immersion $f$ is said to be 
\emph{Blaschke} if $\vartheta$ coincides with the area element 
of the metric $h$. 

On the other hand Nomizu and Sasaki~\cite{NS} showed that there exits a 
transversal vector field $\xi$ such that 
\begin{equation}\label{eq:NS-normalization}
\mathrm{tr}_{h}T+\mathrm{tr}\>S=0.
\end{equation}
Such a vector field is called a \emph{pre-normalized transversal vector field}.
In particular, pre-normalized transversal vector field $\xi$ so that 
$(\mathbb{D},f,\xi)$ is a Blaschke immersion is unique up to sign. 
In such a choice, the pair
surface $(f,\xi)$ is called a \emph{pre-normalized Blaschke immersion}.

Let us take another homogeneous coordinate vector field 
$\tilde{f}=\phi f$. Here $\phi$ is a smooth (nonzero) function.
Then the connection $\tilde{\nabla}$ induced from 
$\tilde{f}$ is projectively equivalent to $\nabla$. 
The equiaffine property is preserved under the change 
$f$ by $\phi f$. 

Let us denote by $\nabla^h$ the Levi-Civita connection of $h$. Then 
the scalar field $J=h(K,K)/2$ is called the 
\emph{Fubini-Pick invariant} of $f$. Here $K=\nabla-\nabla^h$.
The Riemannian metric $Jh$ is projectively invariant and called the 
\emph{projective metric} of $\mathfrak f$. 
Although $C$ itself is \textit{not} projective invariant, 
its conformal class is projective invariant (see~\cite{MN}). 
When $(f,\xi)$ is pre-normalized Blaschke, the projective metric
is given by $h(\nabla h,\nabla h)h/8$.

For more details on centroaffine immersions and projective immersions, we refer 
to~\cite{NS, NSBook}.

\section{Wilczynski frames}\label{sc:Wilczynski}
Let $\mathfrak{f}:M\to \mathbb{R}\mathbb{P}^3$ be an immersed surface with 
\emph{positive definite projective metric}.
We regard $M$ as a Riemann surface with respect to the 
conformal structure $[Jh]$ determined by the projective metric $Jh$. 

We take a simply connected 
complex coordinate region $\mathbb{D}$ with 
coordinate $z = x + i y$ on $\mathbb{D}$ and a  
lift $f=(f^0, f^1, f^2, f^3): \mathbb{D}\to \mathbb{R}^4 \setminus\{\Vec{0}\}$.
Then 
the \emph{canonical system} of $\mathfrak f$ 
is given by
\begin{equation}\label{eq:1.1}
f_{zz}=bf_{\bar z}+pf,\ \ 
f_{\bar{z}\bar{z}}=\bar{b}f_{z}+\bar{p}f
\end{equation}
for some smooth functions $b$ and $p$, see~\cite[p.~121]{Sasaki}. 
Note that the subscripts $z$ and $\bar{z}$ denote the 
 partial derivative of $z$ and $\bar{z}$, respectively:
 \begin{equation*}
 \frac{\partial}{\partial z} := \frac12 \left( \frac{\partial}{\partial x}-\sqrt{-1} \frac{\partial}{\partial y}\right), 
\quad 
 \frac{\partial}{\partial \bar z} := \frac12 \left( \frac{\partial}{\partial x}+\sqrt{-1} \frac{\partial}{\partial y}\right).
 \end{equation*}
Assume that $f^{0}\not=0$, then $\mathfrak f$ is given by the 
inhomogeneous coordinate 
$\mathfrak f=(f^1,f^2,f^3)/f^0$. 
The canonical system is 
rewritten as
\begin{equation}\label{eq:canonicalsystem}
\mathfrak f_{zz}=b\mathfrak f_{\bar z}-2(\log f^0)_{z}\mathfrak f_{z},
\quad 
\mathfrak f_{\bar{z}\bar{z}}=\bar{b}\mathfrak f_{z}-2(\log f^0)_{\bar{z}}
\mathfrak f_{\bar{z}}.
\end{equation}
The integrability condition of the canonical system is 
(\textit{cf.} \cite[\S 2.3]{Sasaki}): 
\begin{gather*}
p_{\bar z}=b\bar{b}_{z}+\frac{1}{2}b_{z}\bar{b}-\frac{1}{2}b_{\bar{z}\bar{z}}, \\
\mathrm{Im}\>(b_{\bar{z}\bar{z}\bar{z}}-b\bar{b}_{z\bar{z}}
-2b\bar{p}_{\bar z}-2b_{\bar z}\bar{b}_{z}-4b_{\bar z}\bar{p})=0.
\end{gather*}

The Fubini-Pick invariant is given by $J=8|b|^2$ and hence the 
projective metric is 
$8|b|^2\,dzd\bar{z}$. The cubic form of $\mathfrak f$ is  
given by  
$C=-2(b\,dz^3+\bar{b}d\bar{z}^3)$ (see~\cite[p.~54, Definition, \S 4.8]{SasakiRokko}). Note that when $f$ is pre-normalized Blashcke, 
then the projective metric is expressed as $2|b|^2\,dzd\bar{z}$.

Hereafter we assume that $b\not=0$. Note that 
when $C=0$, $\mathfrak f$ is a part of a quadratic surface 
(Wilczynski~\cite{Wil1}, Pick~\cite{Pick}. See also~\cite[Theorem 4.4]{SasakiRokko}.)

The \emph{Wilczynski frame} $F$ of $\mathfrak f$ is defined by 
\[
F=(f,f_1,f_2,\eta),
\]
where
\[
f_{1}:=f_{z}-\frac{\>\>\bar{b}_z\>\>}{2\bar{b}}f,
\quad 
f_{2}:=f_{\bar z}-\frac{b_{\bar z}}{2b}f,
\quad 
\eta=f_{z\bar{z}}-\frac{\bar{b}_z}{2\bar{b}}f_{\bar{z}}
-\frac{b_{\bar z}}{2b}f_{z}+
\left(
\frac{|b_{z}|^2}{4|b|^2}-\frac{|b|^2}{2}
\right)f.
\]%
Then a straightforward computation shows that 
 the Wilczynski frame $F$ satisfies the following equations:
\begin{equation}\label{eq:movingframe}
F_{z}=FU \;\;\mbox{and} \;\;F_{\bar z}=FV,
\end{equation}
where 
\begin{align*}
U=&\left(
\begin{array}{cccc}
\bar{b}_z/(2\bar{b})& P & k & b\bar{P}\\
1 & -{\bar b}_z/(2\bar{b}) & 0 & k\\
0 & b & \bar{b}_z/(2\bar{b}) & P\\
0 & 0 & 1 & -\bar{b}_z/(2\bar{b})
\end{array}
\right),
\\
\smallskip
\\
V=&\left(
\begin{array}{cccc}
b_{\bar z}/(2b) & \bar{k} & \bar{P} & \bar{b}P\\
0 & b_{\bar z}/(2b) & \bar{b} & \bar{P}\\
1 & 0 & -b_{\bar z}/(2b) & \bar{k}\\
0 & 1 & 0  & -b_{\bar z}/(2b)
\end{array}
\right).
\\
\smallskip
\end{align*}
Here we introduced functions $k$ and $P$ of as follows:
\begin{gather}
 k = \frac{|b|^2  - (\log b)_{z\bar{z}}}{2},\;\;\; 
\label{eq:kell}\\
 P = p + \frac{b_{\bar z}}{2} -\frac{\bar{b}_{zz}}{2 \bar{b}} +
 \frac{\bar{b}_{z}^2}{4 \bar{b}^2}.
 \label{eq:QandP}
\end{gather}
 The compatibility conditions of \eqref{eq:movingframe} are 
 \begin{gather}
  \bar{P}_{z} = k_{\bar z} + k \frac{b_{\bar z}}{b}, \;\; 
 \label{eq:comp1}\\
\mathrm{Im}\>( \bar{b} P_{z}+ 2 \bar{b}_{z} P)=0. \label{eq:comp2}
 \end{gather}
 These equations are nothing but the 
 \emph{projective Gauss-Codazzi equations}
 of a surface $\mathfrak{f}$. 
 One can see that $Pdz^2$ and $2b^2\bar{P}dz^4$ are 
 globally defined on $M$ and projectively invariant~\cite{Ferapontov}.
 
 Since both $U$ and $V$ are trace free, 
 the Wilczynski frame 
 $F$ takes values in $\mathrm{SL}_{4}\mathbb{C}$ up 
 to initial condition. 
 Moreover, if we choose at some 
  base point $z_* \in \mathbb D$ and $F(z_*)= \id$, then
 the frame $F$ takes values in $\mathrm{SL}_{4}\mathbb{R}$ by conjugation of a simple complex matrix:
 \begin{equation}\label{eq:L}
 \operatorname{Ad}(L)F  \in \SL, \quad  L = 
 \frac1 {\sqrt{2}}
  \begin{pmatrix} 
  \sqrt{2} & 0 & 0 &0 \\ 
  0 & \sqrt{-1}  &  -\sqrt{-1} &0 \\ 
  0 & 1 &  1  &0 \\ 
  0 & 0 & 0 & \sqrt{2} 
  \end{pmatrix}.
 \end{equation}

\section{Projective minimal surfaces and definite Demoulin surfaces}\label{sc:PMD}
A surface $\mathfrak{f}:M\to\mathbb{R}\mathbb{P}^3$ with 
positive definite projective metric is said to be a 
\emph{projective minimal surface} if it is a 
critical point of the area functional of the projective 
metric (called the \emph{projective area functional}):
Then the projective minimality 
 can be computed as in~\cite{Th}:
 \begin{equation}\label{eq:projmin}
 \bar{b} P_{z} + 2 \bar{b}_{z} P =0.
 \end{equation}
 where the functions $P$ is defined in \eqref{eq:QandP}.
It should be remarked that the projective minimality 
\eqref{eq:projmin} implies the second equation 
\eqref{eq:comp2} of the projective Gauss-Codazzi equations.
There is a particular class of projective minimal surfaces with 
positive definite projective metric. 

A surface with positive definite 
projective metric is said 
to be a \emph{definite Demoulin surface} 
if it satisfies $P=0$. 
The Demoulin property is originated from 
Demoulin transformations of surfaces in 
$\mathbb{R}\mathbb{P}^3$. For more details, we refer 
to~\cite{Sasaki}.

\section{The Pl{\"u}cker quadric and the space of conformal spheres}\label{sec:Plucker}
\subsection{The Pl{\"u}cker quadric}
Take a volume element $\Omega$ on $\mathbb{R}^4$ 
parallel with respect to the natural affine connection $D$.
Then we can introduce a 
scalar product $\langle\cdot,\cdot\rangle$ on 
$\wedge^{2}\,\mathbb{R}^4$ by
\[
\langle\alpha,\beta\rangle
=\Omega(\alpha\wedge \beta)
,\quad 
\alpha,
\beta\in \wedge^{2}\mathbb{R}^4.
\]
One can check that $\langle\cdot,\cdot\rangle$ is of signature $(3,3)$.
In fact, let $\{e_0,e_1,e_2,e_3\}$ be the natural basis of 
$\mathbb{R}^4$. Denote by
$\{e^0,e^1,e^2,e^3\}$
the dual basis of $\{e_0,e_1,e_2,e_3\}$. 
Then with respect to the volume element $\Omega=e^0\wedge e^1 \wedge e^2\wedge e^3$, 
the basis 
$\{
e_0\wedge e_1,e_0\wedge e_2,e_0\wedge e_3,
e_1\wedge e_2,e_3\wedge e_1,
e_2\wedge e_3
\}$ of $\wedge^2\mathbb{R}^4$, the scalar 
product $\langle\cdot,\cdot\rangle$ 
is determined by the matrix
$\mathrm{offdiag}(1,1,1,1,1,1)$. 
The special linear group $\mathrm{SL}_4\mathbb{R}$ acts 
on $\wedge^{2}\mathbb{R}^{4}$ via the action:
\[
\mathrm{SL}_{4}\mathbb{R}\times
\wedge^{2}\mathbb{R}^{4}\to
\wedge^{2}\mathbb{R}^{4};\quad 
(g,v\wedge w)\longmapsto gv\wedge gw.
\]
One can see that this action is isometric with 
respect to $\langle\cdot,\cdot\rangle$. This fact 
implies the Lie group isomorphism 
$\mathrm{PSL}_{4}\mathbb{R}\cong\mathrm{SO}^{+}_{3,3}$.
Here $\mathrm{SO}^{+}_{3,3}$ denotes the identity component 
of the semi-orthogonal group $\mathrm{O}_{3,3}$.

Next we consider the Pl{\"u}cker 
embedding of the Grassmannian manifold 
$\mathrm{Gr}_{2}(\mathbb{R}^4)$ of $2$-planes 
in $\mathbb{R}^4$ into the projective 
$5$-space $\mathbb{R}\mathbb{P}^5=\mathbb{P}(\wedge^{2}\mathbb{R}^4)$.
The Pl{\"u}cker coordinates of the $2$-plane spanned by 
$(a^0,a^1,a^2,a^3)$ and 
$(b^0,b^1,b^2,b^3)$ is $[p_{01}:p_{02}:p_{03}:p_{23}:p_{31}:p_{12}]$, 
where 
\begin{equation}\label{eq:PlCoord}
p_{ij}=\det
\left(
\begin{array}{cc}
a^{i} & b^{i}\\
a^{j} & b^{j}
\end{array}
\right).
\end{equation}
The Pl{\"u}cker coordinates 
$[p_{01}:p_{02}:p_{03}:p_{23}:p_{31}:p_{12}]$ of 
$a\wedge b$ satisfies the 
\emph{quadratic Pl{\"u}cker relation}:
\begin{equation}\label{eq:PlRel}
p_{01}p_{23}+p_{02}p_{31}+p_{03}p_{12}=0.
\end{equation}
Thus the Pl{\"u}cker image of $\mathrm{Gr}_{2}(\mathbb{R}^4)$ 
is a projective variety 
(called the \emph{Pl{\"u}cker quadric}) of 
$\mathbb{R}\mathbb{P}^5$ determined by the 
equation \eqref{eq:PlRel}. 
Moreover the  Pl{\"u}cker relation means that 
\newline
\noindent 
$(p_{01},p_{02},p_{03},p_{23},p_{31},p_{12})$ is null with respect to 
$\langle\cdot,\cdot\rangle$. Namely the 
Pl{\"u}cker image of $\mathrm{Gr}_{2}(\mathbb{R}^4)$ is 
the projective light cone $\mathbb{P}(\mathcal{L})$ of $\wedge^{2}\mathbb{R}^4=\mathbb{R}^{3,3}$.

Now let us consider a line $\ell$ in $\mathbb{R}\mathbb{P}^3$ 
connecting two points 
$a=[a^0:a^1:a^2:a^3]$ and 
$b=[b^0:b^1:b^2:b^3]$. 
The Pl{\"u}cker image of $\ell$ in 
$\mathbb{R}\mathbb{P}^5=\mathbb{P}(\wedge^{2}\mathbb{R}^4)$ is 
\[
a\wedge b=[p_{01}:p_{02}:p_{03}:p_{23}:p_{31}:p_{12}]
\]
with \eqref{eq:PlCoord}.
Hence the space $\mathcal{P}$ of lines in $\mathbb{R}\mathbb{P}^3$ is 
identified with the Pl{\"u}cker quadric. 
This identification is called the \emph{Klein correspondence}.

\begin{Remark}{\rm 
The conformal compactification 
of semi-Euclidean $4$-space 
$\mathbb{R}^{2,2}$ of neutral signature 
is obtained as the projective light cone 
$\mathbb{P}(\mathcal{L})\subset\mathbb{R}\mathbb{P}^{5}$ equipped with the conformal structure induced from 
$\mathbb{R}^{3,3}$. The action of $\mathrm{PSL}_{4}\mathbb{R}\cong\mathrm{SO}^{+}_{3,3}$ on 
$\mathbb{P}(\mathcal{L})$ is conformal. One can see that the Pl{\"u}cker quadric $\mathcal{P}=\mathbb{P}(\mathcal{L})$ is 
isomorphic to $\mathrm{Gr}_{2}(\mathbb{R}^4)\cong (\mathbb{S}^2\times\mathbb{S}^2)/\mathbb{Z}_2$ 
(equipped with the standard conformal structure of neutral signature) as a conformal manifold. 
Note that on $\mathbb{P}(\mathcal{L})$, there exits a complex structure compatible to the 
standard neutral metric. The standard neutral metric is neutral K{\"a}hler 
with respect to the complex structure. In particular, the K{\"a}hler form is 
regarded as a standard
symplectic form on $\mathbb{P}(\mathcal{L})$. For more information on 
conformal geometry of $\mathbb{P}(\mathcal{L})$, see~\cite{LM}.
}
\end{Remark}

\subsection{The space of conformal spheres}
A quadric in $\mathbb{R}\mathbb{P}^3$ is 
a surface of the form $\{[v]\in\mathbb{R}\mathbb{P}^3\>|\>
q(v,v)=0\}$, 
where $q$ is a scalar product of $\mathbb{R}^4$.
For our purpose we choose a Lorentzian scalar product 
$q=\langle\cdot,\cdot\rangle$ on $\mathbb{R}^4$ and regarded it as a 
Minkowski $4$-space $\mathbb{R}^{1,3}$. Then 
the quadric is nothing but the 
conformal $2$-sphere (Riemann sphere) in $\mathbb{R}\mathbb{P}^3$.  
The space of conformal $2$-spheres in
$\mathbb{R}\mathbb{P}^3$ is parametrized as
the space $\mathcal{Q}$ of $4\times 4$ symmetric matrices with 
determinant one and signature $(1,3)$. 
In fact, the conformal $2$-sphere 
is given by the Lorentzian scalar product 
$q(u,v)=uQv^{\mathrm T}$ with $Q\in\mathcal{Q}$.

The special linear group 
$\mathrm{SL}_{4}\mathbb{R}$ acts 
transitively on $\mathcal{Q}$ via the action 
$(g,Q)\longmapsto gQg^{\mathrm T}$ with $g\in\mathrm{SL}_{4}
\mathbb{R}$ and $Q\in\mathcal{Q}$.
The stabilizer at

\begin{equation}\label{eq:hatL}
    \hat \jf=  
 \begin{pmatrix} 
  0 & 0 & 0 &1 \\ 
  0 & 1 &  0 &0 \\ 
  0 & 0 & 1 &0 \\ 
  1 & 0 & 0 &0
  \end{pmatrix},
\end{equation}%
is given by 
$\hat{K}_1 = \{ a \in \SL \;|\; a \hat \jf 
 a^{\mathrm T} = \hat \jf\}$, which 
 is isomorphic to the identity component 
 $\mathrm{SO}^{+}_{1, 3}$ of the semi-orthogonal group 
 $\mathrm{O}^{+}_{1, 3}$ of signature $(1, 3)$. 
 Thus $\mathcal{Q}$ is isomorphic to the homogeneous  space $\mathrm{SL}_{4}\mathbb{R}
 /\mathrm{SO}^{+}_{1,3}\cong \mathrm{SO}^{+}_{3,3}/\mathrm{SO}^{+}_{1,3}$.

We introduce a scalar product $\langle\cdot,\cdot\rangle$ at 
$Q\in\mathcal{Q}$ by 
\[
\langle X,Y\rangle_{Q}=\mathrm{Tr}\>(Q^{-1}X\,Q^{-1}Y),
\quad X,Y\in T_{Q}\mathcal{Q}.
\]
Note that at the origin of 
$\mathrm{SO}^{+}_{3,3}/\mathrm{SO}^{+}_{1,3}$,
and $8\langle\cdot,\cdot\rangle$ is the Killing form of 
$\mathfrak{sl}_{4}\mathbb{R}$.
This scalar product is invariant under the action of 
$\mathrm{SL}_{4}\mathbb{R}$. 
In fact, 
\[
\langle gXg^{\mathrm T},gYg^{\mathrm T}\rangle_{gQg^{\mathrm T}}
 =\mathrm{Tr}\,  ((gQ g^{\mathrm T})^{-1} gXg^{\mathrm T} (gQ g^{\mathrm T})^{-1} gY g^{\mathrm T}) 
 =  \langle X, Y\rangle_{Q}.
\]
Thus $\mathcal{Q}=\mathrm{SL}_{4}\mathbb{R}/\hat{K}_1$ is a 
semi-Riemannian symmetric space corresponding to the outer 
involution
\[
\hat{\tau}_{1}(X)=\hat{J}_{1}(X^{\mathrm T})^{-1}\hat{J}_{1}.
\]
\begin{Remark}
{\rm 
The space of lines in the Pl{\"u}cker quadric $\mathcal{P}$ is identified with the Grassmannian manifold 
of all null $2$-planes in $\mathbb{R}^{3,3}$:
\[
\mathcal{Z}=\{
W\in \mathrm{Gr}_{2}(\mathbb{R}^{3,3})
\>\>|\>\>W \>\>\mbox{is a null 2-plane in} \>\> \mathbb{R}^{3,3}\}
\cong 
\mathrm{SO}^{+}_{3,3}/\mathrm{SO}^{+}_{2, 2}.
\]
For surfaces in $\mathbb{R}\mathbb{P}^3$ with 
\emph{indefinite} projective metric, 
two kinds of Gauss maps are considered in 
our previous work~\cite{Kobayashi2015}. 
Those Gauss maps take value in 
the space of quadrics determined by 
scalar products of signature $(2,2)$ of $\mathbb{R}^4$. 
The space of all quadrics derived from 
such scalar products are identified with 
the semi-Riemannian symmetric space 
$\mathrm{SO}^{+}_{3,3}/\mathrm{SO}^{+}_{2, 2}$.
}
\end{Remark}

\section{Demoulin surfaces and the first order Gauss maps}
\label{sc:1stGauss}
 In this section, we define the first-order Gauss map for a surface 
 in $\mathbb R \mathbb P^3$.
 
\subsection{First-order Gauss map}\label{subsc:Gaussmap}
 Let $\mathfrak f: M \to \mathbb{R}\mathbb{P}^3$ be a surface and $F$ the corresponding 
 Wilczynski frame defined in \eqref{eq:movingframe} with 
 a base point $z_* \in \mathbb D$ and $F(z_*)=\id$.
 Let $L$ be the matrix defined in \eqref{eq:L} and 
 $\hat F$ be the $\SL$ matrix such that 
 \begin{equation*}
\operatorname{Ad}(L)F = \hat F.     
 \end{equation*}
 We now  define the \emph{first order Gauss map} $\gf$ as follows:
 \begin{equation}\label{eq:firstorder}
 \gf =   \hat F  \hat J_1 \hat F^{\mathrm T}=\mathrm{Ad}(L)(F \jf F^{\mathrm T}), 
  \end{equation}
 where the matrix $\hat{\jf}$ is the one 
given by \eqref{eq:hatL} and $J_1=
\mathrm{offdiag}(1,1,1,1)$. Note that $\mathrm{Ad}(L) J_1 = \hat J_1$.
Therefore the map $\gf$ takes values in the space of 
conformal $2$-spheres:
 \begin{equation}\label{eq:Gaussmap1}
 \gf : M \to \mathcal{Q} \cong \mathrm{SL}_{4}
\mathbb{R}/\hat{K}_{1} = \mathrm{SL}_{4}\mathbb{R}/\mathrm{SO}^{+}_{1,3}.
 \end{equation}
 This map $\gf$ is known to be a quadric which has 
 the first order contact to the surface and it does not have the
 second order contact, see~\cite[Section 22]{Lane}. 

 We now characterize the Demoulin surface by the first-order 
 Gauss map.
\begin{Proposition}\label{prop:conformality}
 The first-order Gauss map $\gf$ of a surface 
 $\mathfrak f$ in $\mathbb{R}\mathbb{P}^3$ with 
 positive definite projective metric 
 is conformal if and only if
 $\mathfrak f$ is a definite Demoulin surface.
\end{Proposition}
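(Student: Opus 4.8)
The plan is to pull back the scalar product $\langle\cdot,\cdot\rangle$ of $\Q$ by $\gf$, write the resulting symmetric $2$-tensor in the coordinate $z$ (which is a conformal coordinate for the projective metric by the construction of the Wilczynski frame), and show that its $(2,0)$-part is a nonzero constant times $P\,dz^{2}$. Since conformality of $\gf$ is precisely the vanishing of that $(2,0)$-part (its $(0,2)$-part being the complex conjugate), this is exactly the definite Demoulin condition $P\equiv 0$. The first step is to dispose of the auxiliary matrix $L$. The form $\langle X,Y\rangle_{Q}=\tr(Q^{-1}XQ^{-1}Y)$ is invariant under $Q\mapsto gQg^{\mathrm T}$ for \emph{any} invertible complex $g$, by the same one-line manipulation that established $\SL$-invariance; and $\gf=\operatorname{Ad}(L)(F\jf F^{\mathrm T})=L(F\jf F^{\mathrm T})L^{\mathrm T}$ with $L\jf L^{\mathrm T}=\hat J_1$ from \eqref{eq:firstorder}, \eqref{eq:L}. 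Hence the $(2,0)$-coefficient of $\gf^{*}\langle\cdot,\cdot\rangle$ equals $\tr\!\big((Q_{0}^{-1}(Q_{0})_{z})^{2}\big)$ for the $\SLC$-valued map $Q_{0}:=F\jf F^{\mathrm T}$.

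Next I would feed in the frame equations \eqref{eq:movingframe}. From $F_{z}=FU$ and $\jf^{2}=\id$ one gets $(Q_{0})_{z}=F(U\jf+\jf U^{\mathrm T})F^{\mathrm T}$ and $Q_{0}^{-1}=(F^{-1})^{\mathrm T}\jf F^{-1}$, whence
\[
Q_{0}^{-1}(Q_{0})_{z}=(F^{-1})^{\mathrm T}\big(\jf U\jf+U^{\mathrm T}\big)F^{\mathrm T},
\]
which is conjugate to $B:=\jf U\jf+U^{\mathrm T}$; therefore the $(2,0)$-coefficient equals $\tr(B^{2})$. Writing out $\jf=\offdi(1,1,1,1)$ one checks that $\jf U^{\mathrm T}\jf$ is the matrix obtained from $U$ by reversing the sign of its diagonal, so $B^{\mathrm T}=\jf U^{\mathrm T}\jf+U=2U_{\p}$, where $U_{\p}$ is the off-diagonal part of $U$ --- equivalently, the $\p$-component of $U$ for the Cartan decomposition $\mathfrak{sl}_{4}\mathbb{C}=\kk_{1}^{\mathbb C}\oplus\p_{1}^{\mathbb C}$ attached to $X\mapsto-\jf X^{\mathrm T}\jf$, in line with the general principle that a map into a symmetric space induces the norm-square of the $\p$-part of its Maurer--Cartan form. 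Thus $\tr(B^{2})=4\tr(U_{\p}^{2})$.

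The final step is the $4\times4$ trace $\tr(U_{\p}^{2})=\sum_{i,j}(U_{\p})_{ij}(U_{\p})_{ji}$. Inspecting $U$, among its off-diagonal entries only the pairs $U_{12}=P,\;U_{21}=1$ and $U_{34}=P,\;U_{43}=1$ contribute, every other cross-product being of the form $k\cdot 0$, $b\bar P\cdot 0$ or $b\cdot 0$; hence $\tr(U_{\p}^{2})=4P$ and the $(2,0)$-coefficient of $\gf^{*}\langle\cdot,\cdot\rangle$ is $16P$, with $P$ as in \eqref{eq:QandP}. This yields the assertion: $\gf$ is conformal iff $P\equiv 0$, i.e.\ iff $\mathfrak f$ is a definite Demoulin surface. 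I would also record the $(1,1)$-coefficient $\langle(\gf)_{z},(\gf)_{\bar z}\rangle=\tr(U_{\p}V_{\p})=3|b|^{2}-(\log|b|^{2})_{z\bar z}$, obtained by the same short computation, to verify that the induced tensor is nondegenerate (away from its zeros), so that in the Demoulin case $\gf$ is a genuine conformal map.

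The step I expect to be the main obstacle is not any single computation --- each is short --- but the bookkeeping: tracking the exact powers of $\jf$ and of $F,F^{\mathrm T},F^{-1}$ in $Q_{0}^{-1}(Q_{0})_{z}$, confirming that $\operatorname{Ad}(L)$ really acts by $Q\mapsto LQL^{\mathrm T}$ with $L\jf L^{\mathrm T}=\hat J_1$, and making sure that none of the six off-diagonal index pairs is overlooked in $\tr(U_{\p}^{2})$. Once $B$ (equivalently $U_{\p}$) is written out explicitly these checks are routine, and it is the appearance of $P$ --- rather than $k$, $b$ or their derivatives --- in the $(2,0)$-part that makes the Demoulin characterization drop out.
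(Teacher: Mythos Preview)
Your approach is essentially the paper's: both compute $\langle(\gf)_{z},(\gf)_{z}\rangle=16P$ directly, the paper by writing out $\partial_z\gf$ as $2(LF)M(LF)^{\mathrm T}$ for an explicit $4\times4$ matrix $M$ and then tracing, you by the equivalent shortcut $4\tr(U_{\mathfrak p_1}^{2})$ via the Cartan projection $U+J_1U^{\mathrm T}J_1=2U_{\mathfrak p_1}$. Two small slips to fix: the claim ``$J_1U^{\mathrm T}J_1$ is $U$ with diagonal sign reversed'' is not a general identity but a feature of this particular $U$ (what is general is $U+J_1U^{\mathrm T}J_1=2U_{\mathfrak p_1}$, and here $U_{\mathfrak p_1}$ happens to be the off-diagonal part); and your $(1,1)$-coefficient is missing the factor $4$, so it should read $4\tr(U_{\mathfrak p_1}V_{\mathfrak p_1})=8(k+\bar k)+4|b|^{2}$, matching the paper.
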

\begin{proof}
 A direct computation shows that 
\[
 \partial_z {\gf}= 2 (LF)
 \begin{pmatrix}
 b \bar P & k & P & 0 \\ 
 k & 0 & 0 &1 \\
 P & 0 & b & 0 \\
 0 & 1 & 0 & 0
 \end{pmatrix}
 (LF)^{\mathrm T},
\quad 
\partial_{\bar z} {\gf}= 2 (LF)
 \begin{pmatrix}
 \bar b P & \bar P & \bar k& 0 \\ 
 \bar P & \bar b  & 0 &0 \\
 \bar k & 0 & 0 & 1 \\
 0 & 0 & 1 & 0
 \end{pmatrix}(LF)^{\mathrm T}.
\]
 Thus 
\[
\langle \partial_z {\gf}, \partial_z {\gf} \rangle = 16 P, \; 
\langle  \partial_{\bar z}{\gf},  \partial_{\bar z}{\gf} \rangle  = 16 \bar P
\;\; \mbox{and} \;\;
\langle  \partial_z{\gf},  \partial_{\bar z}{\gf} \rangle = \langle \partial_{\bar z}{\gf},\partial_{z}{\gf} \rangle = 
 8(k + \bar k)+ 4 |b|^2.
\]
 Since  the coordinates $(z, \bar z)$  are null for the conformal 
 structure induced by $\mathfrak f$,
 the first-order Gauss map $\gf$ is conformal if and only if $P =0$.
\end{proof}

\subsection{Demoulin surfaces and projective minimal coincidence surfaces}\label{subsection6.2}
We set 
\[
G=\mathrm{Ad}(L^{-1})\mathrm{SL}_{4}\mathbb{R}
=\{L^{-1}XL\>|\>X\in\mathrm{SL}_{4}\mathbb{R}\}
\subset\mathrm{SL}_{4}\mathbb{C},
\]
where $L$ is defined in \eqref{eq:L}. 
The closed subgroup $G$ is a real form of $\mathrm{SL}_{4}\mathbb{C}$ which is 
isomorphic to $\mathrm{SL}_{4}\mathbb{R}$. 
The space 
$\mathcal{Q}$ of conformal $2$-spheres is isomorphic to 
$G/K_1$, where $K_1$ is 
\[
K_1=\{a\in G\>|\>aJ_{1}a^{\mathrm T}=J_1\}.
\]
 Let $\tau_1$ be the outer involution on 
the $G$ 
associated to $G/K_1$ given by
\[
\tau_1 (a) = \jf \left(a^{\mathrm T}\right)^{-1} \jf,
\quad a\in G.
\]

By abuse of notation, we denote the 
differential of $\tau_1$ by the same letter $\tau_1$:  
\begin{equation}\label{eq:tau2}
\tau_1 (X) = - \jf X^{\mathrm T}\jf,
\quad X\in\mathfrak{g}.
\end{equation}%
 Let us consider the eigenspace decomposition
 of $\mathfrak{g}$ with respect to $\tau_1$, that 
 is, $\mathfrak{g} = \mathfrak k_1 \oplus \mathfrak p_1$,
 where $\mathfrak k_1$ is the $(+1)$-eigenspace 
 and $\mathfrak p_1$ is the $(-1)$-eigenspace as follows:
\[
 \mathfrak{k}_1 =\left\{
\begin{pmatrix}
 a_{11} & a_{12} & a_{13} & 0\\
 a_{21} & a_{22} & 0 &  -a_{13}\\
 a_{31} & 0 & -a_{22} & -a_{12}\\
 0 & -a_{31} & -a_{21} & -a_{11}
\end{pmatrix}
\in\mathfrak{g}
\right\}, \quad 
 \mathfrak p_1 =
\left\{
\begin{pmatrix}
 a_{11} & a_{12} & a_{13} & a_{14}\\
 a_{21} & -a_{11} & a_{23} &  a_{13}\\
 a_{31} & a_{32} & -a_{11} & a_{12}\\
 a_{41} & a_{31} & a_{21} & a_{11}
\end{pmatrix}
\in\mathfrak{g}
\right\}.
\]
We decompose the Maurer-Cartan form according to this decomposition 
 $\alpha = F^{-1} d F = U dz + Vd\bar{z}$ along the 
 Lie algebra decomposition 
 $\mathfrak g = \mathfrak k_1 \oplus \mathfrak p_1$.
 First we decompose $U$ and $V$ as
 \[
 U = U_{\mathfrak k_1} + U_{\mathfrak p_1}, 
 \quad 
 V = V_{\mathfrak k_1} + V_{\mathfrak p_1},
 \quad U_{\mathfrak k_1},V_{\mathfrak k_1}\in\mathfrak{k}_1,
 \quad U_{\mathfrak p_1},V_{\mathfrak p_1}\in \mathfrak{p}_1.
 \]
  Next, set $\alpha_{\mathfrak k_1}=U_{\mathfrak k_1}dz+V_{\mathfrak k_1}d\bar{z}$ and 
  $\alpha_{\mathfrak p_1}=U_{\mathfrak p_1}dz+V_{\mathfrak p_1}d\bar{z}$, then we obtain the expression 

 \[
 \alpha = \alpha_{\mathfrak k_1} + \alpha_{\mathfrak p_1} =
 U_{\mathfrak k_1}dz + V_{\mathfrak k_1}d\bar{z} 
 + U_{\mathfrak p_1}dz + 
 V_{\mathfrak p_1}d\bar{z},
 \]
 where $U = U_{\mathfrak k_1} + U_{\mathfrak p_1}$ and
 $V = V_{\mathfrak k_1} + V_{\mathfrak p_1}$.
 Let us insert the spectral parameter $\lambda \in \mathbb{S}^1$
 into $U$ and $V$ as follows:
 \[ 
 U^{\lambda} = U_{\mathfrak k_1} + \lambda^{-1} U_{\mathfrak p_1} \;\;\mbox{and}  \;\;
 V^{\lambda} = V_{\mathfrak k_1} + \lambda V_{\mathfrak p_1}.
 \]
 Then a $\mathbb{S}^1$-family of $1$-forms $\alpha_{\lambda}$ 
 is defined as follows:
\begin{equation}\label{eq:alphalambda2} 
 \alpha^{\lambda} = 
 \alpha_{\mathfrak k_1} + \lambda^{-1} \alpha_{\mathfrak p_1}^{\prime}
 + \lambda \alpha_{\mathfrak p_1}^{\prime \prime} 
 = U^{\lambda} dz + V^{\lambda} d\bar{z}.
\end{equation}
 Using the matrices $U^{\lambda}$ 
 and $V^{\lambda}$, they are explicitly given 
 as follows:
\begin{equation*}\label{eq:extendedUV2}
 U^{\lambda}= 
 \begin{pmatrix}
 \bar{b}_z/(2\bar{b}) & \lambda^{-1} P & \lambda^{-1} k  &  \lambda^{-1}b \bar{P}  \\[0.1cm]
 \lambda^{-1}  & - \bar{b}_z/(2\bar{b}) & 0 & \lambda^{-1} k \\[0.1cm]
 0&\lambda^{-1} b& \bar{b}_z/(2\bar{b}) & \lambda^{-1} P \\[0.1cm]
 0 & 0 & \lambda^{-1} &- \bar{b}_z/(2\bar{b})
 \end{pmatrix}, \quad 
 V^{\lambda}= 
 \begin{pmatrix}
 b_{\bar z}/(2 b) & \lambda \bar{k} &  \lambda\bar{P}  & \lambda \bar{b}\bar{P} \\[0.1cm]
 0  & b_{\bar z}/(2 b) & \lambda \bar b & \lambda \bar P \\[0.1cm]
 \lambda &0& -b_{\bar z}/(2 b) & \lambda \bar{k} \\[0.1cm]
 0 & \lambda  & 0 &-b_{\bar z}/(2 b)
 \end{pmatrix}.
\end{equation*}
After these preparation,  we obtain 
the following theorem.
\begin{Theorem}\label{thm:Demoulin}
 Let $\mathfrak f$ be a surface in $\mathbb{R}\mathbb{P}^3$ with 
 positive definite projective metric 
 and $\gf$ the first-order
 Gauss map defined  in \eqref{eq:Gaussmap1}.
 Moreover, let $\{\alpha^{\lambda}\}_{\lambda
 \in \mathbb{S}^{1}}$  be a family of $1$-forms defined in 
 \eqref{eq:alphalambda2}. Then 
 the following three properties are mutually equivalent$:$
\begin{enumerate}
\item[$1.$] The surface $\mathfrak f$ is a definite Demoulin surface 
 or a projective minimal 
 coincidence surface. 
\item[$2.$]  The first-order Gauss map $\gf$ 
 is a harmonic map into $\mathcal{Q}$.
\item[$3.$]  $\{d + \alpha^{\lambda}\}_{\lambda\in\mathbb{S}^1}$ is a family of flat connections 
 on $\mathbb{D} \times G$.
\end{enumerate}
\end{Theorem}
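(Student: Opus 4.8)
The plan is to establish the triangle of equivalences in two independent steps: prove $(2)\Leftrightarrow(3)$ by the standard zero-curvature (loop-group) characterisation of harmonic maps into a symmetric space, which carries over verbatim to the semi-Riemannian target $\mathcal Q=G/K_1$ because only the reductive decomposition $\mathfrak g=\mathfrak k_1\oplus\mathfrak p_1$, not definiteness, is used; and prove $(1)\Leftrightarrow(2)$ by an explicit computation with the Wilczynski frame. Throughout, $F:\mathbb D\to G$ is the Wilczynski frame, $\alpha=F^{-1}dF=U\,dz+V\,d\bar z$ its Maurer--Cartan form, and $g_1=\pi\circ F$ with $\pi:G\to G/K_1=\mathcal Q$; since $M$ carries only the conformal class $[Jh]$ and harmonicity of a map from a Riemann surface is conformally invariant, the phrase ``$g_1$ is harmonic'' is well defined.

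\emph{Step 1 ($(2)\Leftrightarrow(3)$).} I would decompose $\alpha=\alpha_{\mathfrak k_1}+\alpha_{\mathfrak p_1}'+\alpha_{\mathfrak p_1}''$ (the $\mathfrak p_1$-part split by bidegree), so that $\alpha^\lambda=\alpha_{\mathfrak k_1}+\lambda^{-1}\alpha_{\mathfrak p_1}'+\lambda\alpha_{\mathfrak p_1}''$ as in \eqref{eq:alphalambda2}. Since $F$ frames an actual surface, the Maurer--Cartan equation $d\alpha+\alpha\wedge\alpha=0$ (equivalently the projective Gauss--Codazzi equations \eqref{eq:comp1}--\eqref{eq:comp2}) holds identically; splitting it along $\mathfrak g=\mathfrak k_1\oplus\mathfrak p_1$, using $[\mathfrak k_1,\mathfrak p_1]\subset\mathfrak p_1$ and $[\mathfrak p_1,\mathfrak p_1]\subset\mathfrak k_1$, gives a $\mathfrak k_1$-valued relation together with the $\mathfrak p_1$-valued relation $d\alpha_{\mathfrak p_1}+[\alpha_{\mathfrak k_1}\wedge\alpha_{\mathfrak p_1}]=0$. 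Now I would expand $d\alpha^\lambda+\alpha^\lambda\wedge\alpha^\lambda$ and collect powers of $\lambda$: the $\lambda^{\pm2}$-coefficients vanish because $\alpha_{\mathfrak p_1}'\wedge\alpha_{\mathfrak p_1}'=\alpha_{\mathfrak p_1}''\wedge\alpha_{\mathfrak p_1}''=0$ (pure bidegree $(1,0)$, resp. $(0,1)$); the $\lambda^0$-coefficient is exactly the $\mathfrak k_1$-part of the Maurer--Cartan equation, hence zero; and the $\lambda^{-1}$- and $\lambda^{+1}$-coefficients are $\bar\partial\alpha_{\mathfrak p_1}'+[\alpha_{\mathfrak k_1}\wedge\alpha_{\mathfrak p_1}']$ and its complex conjugate. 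Thus $\{d+\alpha^\lambda\}_{\lambda\in\mathbb S^1}$ is a family of flat connections on $\mathbb D\times G$ (note $\alpha^\lambda$ is $\mathfrak g$-valued on $\mathbb S^1$ by the reality of the bidegree splitting) if and only if $\bar\partial\alpha_{\mathfrak p_1}'+[\alpha_{\mathfrak k_1}\wedge\alpha_{\mathfrak p_1}']=0$, which, modulo the Maurer--Cartan equation, is precisely the harmonic map equation $d\star\alpha_{\mathfrak p_1}+[\alpha_{\mathfrak k_1}\wedge\star\alpha_{\mathfrak p_1}]=0$ for $g_1:M\to\mathcal Q$. This is the usual Pohlmeyer--Uhlenbeck argument; compare the treatment of the indefinite case in \cite{Kobayashi2015}.

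\emph{Step 2 ($(1)\Leftrightarrow(2)$).} From the explicit matrices $U^\lambda,V^\lambda$ one reads off $U_{\mathfrak k_1}=\operatorname{diag}(c,-c,c,-c)$ with $c=\bar b_z/(2\bar b)$, $V_{\mathfrak k_1}=\operatorname{diag}(d,d,-d,-d)$ with $d=b_{\bar z}/(2b)$, and
\[
U_{\mathfrak p_1}=\begin{pmatrix}0&P&k&b\bar P\\ 1&0&0&k\\ 0&b&0&P\\ 0&0&1&0\end{pmatrix}.
\]
By Step 1, $g_1$ is harmonic if and only if $(U_{\mathfrak p_1})_{\bar z}+[V_{\mathfrak k_1},U_{\mathfrak p_1}]=0$. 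Carrying out the commutator and simplifying with the Codazzi equation \eqref{eq:comp1} in the form $\bar P_z=k_{\bar z}+k\,b_{\bar z}/b$, I expect this matrix equation to reduce to three scalar conditions: $P_{\bar z}=0$ (from positions $(1,2)$ and $(3,4)$), $\bar P_z=0$ (from positions $(1,3)$ and $(2,4)$), and $2b_{\bar z}\bar P+b\bar P_{\bar z}=0$ (from position $(1,4)$), all other entries vanishing automatically; the first two are complex conjugates, and the conjugate of the third is exactly the projective minimality \eqref{eq:projmin}. Hence $g_1$ is harmonic if and only if $\mathfrak f$ is projective minimal and the projectively invariant form $P\,dz^2$ is holomorphic. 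By the classical structure theory of projective minimal surfaces, such a surface is either a definite Demoulin surface (the case $P\equiv0$) or a definite projective minimal coincidence surface (the case $P\not\equiv0$); see \cite{Sasaki} (\textit{cf.} \cite{Ferapontov,Th}). This yields $(1)\Leftrightarrow(2)$ and completes the proof.

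\emph{Expected main obstacle.} Step 1 is routine. In Step 2 the matrix computation is short but bookkeeping-heavy, its only non-mechanical input being \eqref{eq:comp1}. The delicate point is the last step: matching the analytic conditions ``projective minimal and $P\,dz^2$ holomorphic'' with the classical geometric dichotomy ``Demoulin surface or projective minimal coincidence surface'', which rests on Demoulin's theory of the pair of transforms of a projective minimal surface; the careful verification (or precise citation) of this correspondence is where the real content of the $(1)\Leftrightarrow(2)$ direction lies.
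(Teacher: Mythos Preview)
Your strategy and Step~1 coincide with the paper's proof: the equivalence $(2)\Leftrightarrow(3)$ is exactly Proposition~\ref{prop:ZCR} applied to the symmetric pair $(G,K_1)$, and your matrix computation in Step~2 is correct---the entries you identify yield precisely the paper's system \eqref{eq:ZCR1st}, namely $P_{\bar z}=0$, $k_{\bar z}+k\,b_{\bar z}/b=0$, and $\bar bP_z+2\bar b_zP=0$.

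The genuine gap is the one you yourself flag. You reduce $(2)$ to ``projective minimal with $P\,dz^2$ holomorphic'' and then appeal to classical literature for the dichotomy Demoulin versus projective minimal coincidence. The paper does \emph{not} cite this; it derives it in a few lines directly from the three equations, and the argument hinges on the form $k_{\bar z}+k\,b_{\bar z}/b=0$ that you traded away via \eqref{eq:comp1}. Assuming $P\not\equiv0$, holomorphicity of $P$ combined with $\bar bP_z+2\bar b_zP=0$ gives that $(\log\bar b)_z=-P_z/(2P)$ is holomorphic, hence $(\log b/\bar b)_{z\bar z}=0$; after a conformal coordinate change preserving the canonical system one may take $b$ real, whence \eqref{eq:kell} gives $2k=b^2$. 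Substituting this into the \emph{original} second equation forces $b_{\bar z}=0$, so $b$ (being real) is constant; the third equation then makes $P=p$ a nonzero constant. The resulting canonical system $f_{zz}=bf_{\bar z}+pf$, $f_{\bar z\bar z}=bf_z+\bar pf$ with $b,p$ constant is exactly a projective minimal coincidence surface. So your ``expected main obstacle'' is not a deep classical black box but a short continuation of your own calculation; to carry it out you must keep the middle equation in its $k$-form rather than immediately rewriting it as $\bar P_z=0$ and declaring it redundant.
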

\begin{proof}
 Let us first compute the flatness 
 \[
 d \alpha^{\lambda} + 
 \frac{1}{2}[\alpha^{\lambda} \wedge \alpha^{\lambda}]=0,
 \;\;\ \lambda\in\mathbb{S}^1
 \]
 for the connection $d + \alpha^{\lambda}$ on 
 $\mathbb{D}\times G$. 
 A straightforward computation shows that 
 $d \alpha^{\lambda} + 
 \tfrac{1}{2}[\alpha^{\lambda} \wedge \alpha^{\lambda}]=0$ holds for all $\lambda\in\mathbb{S}^1$ if and only if 
 \begin{equation}\label{eq:ZCR1st}
 P_{\bar z} = 0,
  \;\; \; 
  k_{\bar z} +k   \frac{b_{\bar z}}{b} =0,  \;\; \; 
 {\bar b} P_z + 2 {\bar b}_z P =0.
 \end{equation}
 On can see that this system implies the 
 projective Gauss-Codazzi equations 
 \eqref{eq:comp1}--\eqref{eq:comp2}. 
 In particular, the third equation is nothing but 
 the projective minimality equation \eqref{eq:projmin}.
 
 Every definite Demoulin surface clearly satisfies the above flatness condition (zero curvature equations) since 
 $P=0$. 
 
Assume that $P\neq 0$. The first equation of \eqref{eq:ZCR1st} means that 
$Pdz^2$ is a holomorphic differential. 
From the third equation together with the holomorphicity of $P$, one can deduce that $(\log \bar{b})_z$ is holomorphic.
Hence $(\log b/\bar{b})_{z\bar{z}} =0$. 
Via the holomorphic coordinate change of $z$ preserving the form of canonical system,
 we can assume that 
 $b = \bar{b}$, \textit{i.e.,} $b$ is real
  \footnote{The transformation rule  
 of $b$ under the conformal change of coordinates $w(z)$ 
  is given by $\tilde b = (\bar w_{\bar z}/w_z^2) b$, and thus 
   $\tilde b = \bar {\tilde b}$ can be achieved by a suitable choice of the function $w(z)$ 
   under the condition $(\log  b/\bar b)_{z \bar z}=0$, see~\cite[Section 3]{Ferapontov}.}. 
   Then \eqref{eq:kell} implies that $2k=b^2$. By using the second equation of \eqref{eq:ZCR1st}, 
 $b$ is constant and $k$ is a real constant.
 By using the third equation again, we get $P$ is constant.
 This implies that $P=p$ is a non-zero constant. 
 After these reparametrization, the canonical system 
 is rewritten as 
 \[
 f_{zz} = bf_{\bar z} + p f,\;\;
  f_{\bar{z}\bar{z}} = bf_{z} + p f.
 \]
 A surface satisfying the above equation 
 is a special case of the \emph{coincidence surface},~\cite[Example 2.19]{Sasaki}. In fact, it is easy to see 
 that the surface is a projective minimal coincidence surface.
 Thus the equivalence of the claim $1$ and claim $3$ follows.
 
 The equivalence of the claims $2$ and $3$ follows from 
Proposition \ref{prop:ZCR}, since the $\mathbb{S}^1$-family of 
 $1$-forms $\alpha^{\lambda}$ is given 
 by the involution $\tau_1$ and it defines 
 the semi-Riemannian symmetric space $\mathcal{Q}
 =\mathrm{SL}_{4}\mathbb{R}/K_1$.
\end{proof}

\begin{Corollary}\label{coro:Demoulin}
 Retaining the assumptions in Theorem $\ref{thm:Demoulin}$, 
 the following are equivalent$:$
\begin{enumerate}
\item[$1.$]  The surface $\mathfrak f$ is a definite Demoulin surface.
\item[$2.$]  The first-order Gauss map $\gf$ 
 is a conformal harmonic map into $\mathcal{Q}$.
\end{enumerate}
\end{Corollary}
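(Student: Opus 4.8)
The plan is to deduce Corollary~\ref{coro:Demoulin} directly by combining Theorem~\ref{thm:Demoulin} with Proposition~\ref{prop:conformality}, rather than redoing any computation. By Theorem~\ref{thm:Demoulin}, the first-order Gauss map $\gf$ is harmonic into $\mathcal Q$ if and only if $\mathfrak f$ is either a definite Demoulin surface or a projective minimal coincidence surface; and by Proposition~\ref{prop:conformality}, $\gf$ is conformal if and only if $\mathfrak f$ is a definite Demoulin surface. So for the implication $1\Rightarrow 2$ there is nothing to do: if $\mathfrak f$ is a definite Demoulin surface then $P=0$, hence $\gf$ is conformal by Proposition~\ref{prop:conformality}, and it is harmonic by Theorem~\ref{thm:Demoulin}. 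The substance is in $2\Rightarrow 1$.

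For $2\Rightarrow 1$, suppose $\gf$ is a conformal harmonic map. Harmonicity gives, via Theorem~\ref{thm:Demoulin}, that $\mathfrak f$ is a definite Demoulin surface or a projective minimal coincidence surface. Conformality gives, via Proposition~\ref{prop:conformality}, that $\mathfrak f$ is a definite Demoulin surface, i.e. $P=0$. Thus the conformality hypothesis already forces the conclusion, and the only thing left is to check that these two characterizations are logically consistent — that is, that the conformal case genuinely lands in the ``definite Demoulin'' branch of the dichotomy of Theorem~\ref{thm:Demoulin} — which it does, since $P=0$ is exactly the Demoulin condition. So the proof is a two-line syllogism: conformality $\Rightarrow P=0 \Rightarrow$ definite Demoulin.

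The one point that deserves an explicit remark — and the only place a reader might object — is the status of a projective minimal coincidence surface that is \emph{not} Demoulin. For such a surface we showed in the proof of Theorem~\ref{thm:Demoulin} that $P=p$ is a \emph{nonzero} constant, so by the formula $\langle \partial_z\gf,\partial_z\gf\rangle = 16P$ from the proof of Proposition~\ref{prop:conformality} the Gauss map is \emph{not} conformal. This confirms that the two branches of Theorem~\ref{thm:Demoulin} are distinguished precisely by conformality of $\gf$, so adding conformality to harmonicity isolates the Demoulin branch. I expect the only mild obstacle to be making sure the quoted sign/normalization conventions for $P$ match between the two earlier results, but since both are stated within this paper with the same $P$ from \eqref{eq:QandP}, this is immediate.

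\begin{proof}
 This is immediate from Theorem~\ref{thm:Demoulin} and
 Proposition~\ref{prop:conformality}. Indeed, if $\mathfrak f$
 is a definite Demoulin surface, then $P = 0$, so $\gf$ is
 conformal by Proposition~\ref{prop:conformality} and harmonic
 by Theorem~\ref{thm:Demoulin}; hence $1 \Rightarrow 2$.
 Conversely, assume $\gf$ is conformal and harmonic.
 Conformality of $\gf$ together with
 Proposition~\ref{prop:conformality} yields that $\mathfrak f$
 is a definite Demoulin surface; hence $2 \Rightarrow 1$.

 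For completeness we note that a projective minimal coincidence
 surface which is not a definite Demoulin surface has
 $P = p$ a nonzero constant, as observed in the proof of
 Theorem~\ref{thm:Demoulin}, and therefore
 $\langle \partial_z \gf, \partial_z \gf \rangle = 16 P \ne 0$;
 thus its first-order Gauss map $\gf$ is harmonic but not
 conformal. Consequently the two alternatives in
 claim $1$ of Theorem~\ref{thm:Demoulin} are precisely
 distinguished by the conformality of $\gf$.
\end{proof}
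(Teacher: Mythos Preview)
Your proof is correct and follows essentially the same approach as the paper: both combine Proposition~\ref{prop:conformality} (conformality of $\gf$ $\Leftrightarrow$ $P=0$) with Theorem~\ref{thm:Demoulin} (Demoulin $\Rightarrow$ harmonic) to obtain each implication. Your additional paragraph on the non-Demoulin projective minimal coincidence case is a nice clarification but not needed for the argument.
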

\begin{proof}
 From Proposition \ref{prop:conformality}, it is 
 easy to see that the first-order Gauss map 
 is conformal if and only if it satisfies that $P=0$, that is, 
 the surface is a definite Demoulin surface. Moreover, from Theorem 
 \ref{thm:Demoulin} the Gauss map of the Demoulin surface 
 is harmonic.
\end{proof}

This corollary implies that if $\mathfrak f$ is a definite Demoulin 
 surface or a projective minimal coincidence surface, then there exists a $\mathbb{S}^1$-parameter family of smooth map
 $F_{\lambda}:\mathbb{D}\times\mathbb{S}^1
 \to G$ which is a solution to 
 \[
(F_{\lambda})^{-1}dF_{\lambda}=\alpha^\lambda
 \]
 under initial condition $F_{\lambda}(z_*)=\mathrm{id}$.
 One can see that $F_{\lambda}$ is regarded as 
 a smooth map of $\mathbb{D}$ into the following 
twisted loop group
\[
 \Lambda G_{\tau_1}= \{ g : \mathbb{S}^{1} \to G\;|\; 
 \tau_1 g(\lambda) = g (-\lambda)\}
\]
of $G$. 
The $\Lambda G_{\tau_1}$-valued map 
$F_{\lambda}$ is referred as to the 
\emph{extended Wilczynski frame} of a definite Demoulin surface.

Precisely speaking, the extended 
Wilczynski frame $F_{\lambda}$
is not the Wilczynski frame of a Demoulin surface or a projective minimal coincidence surface except for $\lambda =1$.
By conjugating $F_{\lambda}$ by $DF_{\lambda}D^{-1}$ with 
$D=\di (1, \lambda, \lambda^{-1}, 1)$, 
 the frames $DF_{\lambda}D^{-1}$ give 
 a family of Wilczynski frames for Demoulin surfaces
  or projective minimal coincidence surfaces.
 The corresponding Demoulin surfaces or projective 
 minimal coincidence surfaces have the same projective metric 
 $8 |b|^2 \>dz d\bar{z}$ but the different conformal classes of cubic forms
 $\lambda^{-3} b \>dz^3$. Moreover, 
 the functions $P$ changes as $\lambda^{-2} P$.

\section{Primitive lifts}\label{sec:Primitive}

 We now show that the extended Wilczynski frame for a Demoulin surface 
 has an additional 
 order three cyclic symmetry.
 Let $\sigma$ be an order three automorphism on the 
 complexification $\mathrm{SL}_{4}\mathbb{C}$ 
 of $G$ as follows:
\[
 \sigma X = \mathrm{Ad}(E) X, \;\; X \in \mathrm{SL}_{4}\mathbb{C},
\]
 where $E =\di (1, \epsilon^2, \epsilon, 1)$ with 
 $\epsilon = e^{2\pi \sqrt{-1}/3 }$. 
 It should be emphasized that $\sigma$ preserves the real form $G$. Thus $\sigma$ is regarded as an automorphisms of  $G$.
 
Next, one can check that 
 $F_{\lambda}$ satisfies the symmetry
 $\sigma (F_{\lambda}) =F_{\epsilon \lambda}$,
 since $U^{\lambda}$ 
 and $V^{\lambda}$ satisfy
 the same symmetry. It is also easy to see that 
 $\tau_1$ and $\sigma$ commute, and $\kappa = \tau_1 \circ \sigma$
 defines an automorphism of order six. We obtain a 
 regular semi-Riemannian $6$-symmetric space $G/K$ 
 (see  Appendix A.1), where 
 \begin{equation}\label{eq:stabilizer6}
 K =\{\di (k_1, k_2, k_2^{-1}, k_1^{-1}) \;|\;k_1
 \in \mathbb{R}^{\times}, k_2 \in \mathbb{S}^{1}\}
 \cong\mathrm{SO}_{1,1}\times\mathrm{SO}_2.
 \end{equation}%
Note that $G/K$ is identified with 
 $\{gJg^{\mathrm T}\>|\>g\in G\}$, where $J=EJ_1$.
 There is a homogeneous projection 
 \[
 \pi:G/K\to G/K_{1};\;\; gK\longmapsto gK_{1}.
 \]%
 The extended Wilczynski 
 frame $F_{\lambda}$ satisfies the symmetry
\[
 \kappa (F_{\lambda}) = F_{- \epsilon \lambda}.
\]
 Note that $-\epsilon$ is the $6$th root of unity. 
 From the above argument, it is easy to see 
 that the extended Wilczynski frame 
 $F_{\lambda}= F(\lambda)$ for a Demoulin surface 
 is an element of the twisted loop group of $G$:
 \[
 \Lambda G_{\kappa} = \{ g : \mathbb{S}^{1} \to G\;|\; 
 \kappa g(\lambda) = g (-\epsilon \lambda)\}.
 \]
\begin{Theorem}\label{coro:primitivity}
 The first-order Gauss map of a Demoulin surface, which 
 is conformal harmonic into $\mathcal{Q}= G/K_{1}$, 
 can be obtained by the homogeneous projection 
 of a primitive map into the regular semi-Riemannian $6$-symmetric space 
 $G/K \cong\mathrm{SL}_{4}\mathbb{R}/\mathrm{SO}_{1,1}\times\mathrm{SO}_2$.
\end{Theorem}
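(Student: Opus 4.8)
The plan is to invoke Theorem~\ref{thm:Demoulin} and Corollary~\ref{coro:Demoulin} together with the cyclic symmetry just established, and then to recognize the extended Wilczynski frame $F_\lambda$ as a primitive map. By Corollary~\ref{coro:Demoulin}, for a definite Demoulin surface the first-order Gauss map $\gf$ is conformal harmonic into $\mathcal{Q}=G/K_1$, and by the equivalence of claims~$2$ and~$3$ in Theorem~\ref{thm:Demoulin} this harmonicity is encoded by the $\mathbb{S}^1$-family of flat connections $d+\alpha^\lambda$, whose integration gives the extended Wilczynski frame $F_\lambda\colon\mathbb{D}\to\Lambda G_{\tau_1}$ with $F_\lambda(z_*)=\id$. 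The composition $\mathbb{D}\to\Lambda G_{\tau_1}/\Lambda K_1$ projects onto $\gf$ under $\Lambda G_{\tau_1}/\Lambda K_1 \to G/K_1$ (evaluation at $\lambda=1$), so it remains only to lift one step further and to verify primitivity with respect to $\kappa=\tau_1\circ\sigma$.

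First I would record the extra symmetry. Because $U^\lambda$ and $V^\lambda$ are $\mathrm{Ad}(E)$-equivariant in $\lambda$ (that is, $\sigma(U^\lambda)=U^{\epsilon\lambda}$, $\sigma(V^\lambda)=V^{\epsilon\lambda}$, which one reads off directly from the explicit matrices, the diagonal conjugation by $E=\di(1,\epsilon^2,\epsilon,1)$ matching the powers of $\lambda$ on the off-diagonal entries), uniqueness of solutions of the flat system with fixed initial value at $z_*$ forces $\sigma(F_\lambda)=F_{\epsilon\lambda}$; combined with $\tau_1(F_\lambda)=F_{-\lambda}$ from Theorem~\ref{thm:Demoulin}, this gives $\kappa(F_\lambda)=F_{-\epsilon\lambda}$, i.e. $F_\lambda\in\Lambda G_\kappa$. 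Then I would read off the Fourier expansion of $\alpha^\lambda$: from \eqref{eq:alphalambda2}, $\alpha^\lambda=\lambda^{-1}\alpha'_{\mathfrak p_1}+\alpha_{\mathfrak k_1}+\lambda\,\alpha''_{\mathfrak p_1}$, so the only negative Fourier mode is the $\lambda^{-1}$-term and it takes values in $\mathfrak p_1$. In the $\kappa$-graded decomposition $\mathfrak{g}^{\mathbb C}=\bigoplus_{j\in\mathbb Z_6}\mathfrak{g}_j$ determined by the order-six automorphism $\kappa$ (whose fixed algebra is the Lie algebra $\mathfrak k$ of $K$ in \eqref{eq:stabilizer6}), one checks that the $\lambda^{-1}$-component of $F_\lambda^{-1}dF_\lambda$ lies in $\mathfrak{g}_{-1}$ (equivalently $\mathfrak{g}_{5}$): this is exactly the condition $U^\lambda_{\mathfrak p_1}$, $V^\lambda_{\mathfrak p_1}$ are further refined by $\sigma$ into the $\epsilon$-eigenspace, which the explicit form of $U^\lambda,V^\lambda$ confirms. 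That makes $F_\lambda$, viewed as a map into $\Lambda G_\kappa$ and then projected to $\Lambda G_\kappa/\Lambda K_\kappa = G/K$, a \emph{primitive map} in the sense of the Appendix.

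Finally I would close the diagram: the homogeneous projection $\pi\colon G/K\to G/K_1$ sends this primitive map to the $G/K_1$-valued map obtained by evaluating $F_\lambda$ at $\lambda=1$ and projecting mod $K_1$, which is precisely $\gf$ by its definition \eqref{eq:firstorder}, \eqref{eq:Gaussmap1}. Hence $\gf$ is the homogeneous projection of a primitive map into $G/K\cong\mathrm{SL}_4\mathbb{R}/(\mathrm{SO}_{1,1}\times\mathrm{SO}_2)$, as claimed. The main obstacle I anticipate is bookkeeping rather than conceptual: one must check carefully that the $\mathbb{Z}_6$-grading induced by $\kappa$ places the $\lambda^{\mp1}$-parts of $\alpha^\lambda$ in the correct graded pieces $\mathfrak{g}_{\pm1}$ (and not merely in $\mathfrak p_1^{\mathbb C}$), and that the twisting conventions of $\Lambda G_\kappa$ match the $6$-symmetric space structure of the Appendix; this is the standard but slightly delicate compatibility between the $\tau_1$-twisting already used for $\Lambda G_{\tau_1}$ and the additional $\sigma$-symmetry, and it is forced by the explicit shape of $U^\lambda$ and $V^\lambda$ displayed above.
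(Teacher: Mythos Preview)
Your proposal is correct and follows essentially the same route as the paper: verify that the Maurer-Cartan form of the extended Wilczynski frame satisfies the primitivity condition for the $6$-symmetric structure determined by $\kappa=\tau_1\circ\sigma$, then project via $\pi:G/K\to G/K_1$. The paper carries out the ``bookkeeping'' step you anticipate by writing $\mathfrak{g}_0^{\mathbb C}$ and $\mathfrak{g}_{\pm 1}^{\mathbb C}$ explicitly and observing that $U_{\mathfrak p_1}$, $V_{\mathfrak p_1}$ land in $\mathfrak{g}_{-1}^{\mathbb C}$, $\mathfrak{g}_{1}^{\mathbb C}$ \emph{precisely because} $P=0$; note that this Demoulin hypothesis is also what makes your claimed equivariance $\sigma(U^\lambda)=U^{\epsilon\lambda}$ true, since for the $(1,2)$, $(3,4)$ and $(1,4)$ entries conjugation by $E$ produces the wrong power of $\epsilon$ when $P\neq 0$.
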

\begin{proof}
 The $0$th-eigenspace $\mathfrak{g}^{\mathbb C}_{0}$ 
 and $\pm1$st-eigenspaces 
 $\mathfrak{g}^{\mathbb C}_{\pm 1}$  of 
 the derivative of the order six automorphism 
 $\kappa =\tau_1 \circ \sigma$
 are described as follows:
\begin{equation*}
 \mathfrak g^{\mathbb C}_{0} = \left\{\di(a_{11}, a_{22}, - a_{22}, -a_{11})
 \;|\;a_{11}\in \mathbb{R},\;\;a_{22} \in \mathbb{C} \right\},
\end{equation*}
and 
\begin{equation*}
 \mathfrak g^{\mathbb C}_{-1} = \left\{
\left.
 \begin{pmatrix} 
0 & 0 & a_{13} & 0  \\ 
a_{21} & 0 & 0 & a_{13}  \\ 
0 & a_{32} & 0& 0  \\ 
0 & 0 & a_{21} & 0  
\end{pmatrix}
\right| a_{i j} \in \mathbb{C}
\right\},\quad
 \mathfrak g^{\mathbb C}_{1} =
\left\{
\left.
 \begin{pmatrix} 
0 & a_{12} & 0 & 0  \\ 
0 & 0 & a_{23} & 0  \\ 
a_{31} & 0 & 0& a_{12}  \\ 
0 & a_{31} & 0 & 0  
\end{pmatrix}
\right| a_{i j} \in \mathbb{C}
\right\}.
\end{equation*}
 From the matrices $U^{\lambda}$ and $V^{\lambda}$
 in \eqref{eq:alphalambda2}  with $P=0$, we see that 
 the condition in Definition \ref{def:primitivemap}
 of primitive map is satisfied.
 The stabilizer of $\kappa$ is the closed subgroup $K$ given by 
 \eqref{eq:stabilizer6}.
 Therefore there is a primitive map $g= F J F^{\mathrm T}$
 $J=E J_1$ into the $6$-symmetric space $G/K$ such that 
$\pi\circ g=\mathrm{Ad}(L^{-1})g_1$.
 Since 
 $\mathrm{Ad}(L^{-1}):\mathrm{SL}_{4}\mathbb{R}/\hat{K}_1\to G/K_1$ is 
 an isometry, $g_1=\mathrm{Ad}(L)(\pi\circ g)$ is harmonic.
\end{proof}

This theorem enable us to establish a 
generalized Weierstrass type representation for 
definite Demoulin surfaces by virtue of 
primitive maps into the 
semi-Riemannian $6$-symmetric space $G/K$, see~\cite{DMPW}.

\begin{Remark}
{\rm 
Corresponding result theorem for 
indefinite Demoulin surfaces was obtained by the second named author in the preprint 
version of~\cite{Kobayashi2015}. 
}
\end{Remark}

\appendix
\section{Primitive harmonic maps}
\subsection{Homogeneous geometry}\label{sc:homogeneous}
Let $G$ be a semi-simple real Lie group with automorphism 
$\tau$ of order $k\geq 2$. We consider a 
reductive homogenous space $G/K$ equipped with a 
$G$-invariant semi-Riemannian metric satisfying the 
following three conditions:
\begin{itemize}
\item The closed subgroup $H$ satisfies
$G^{\circ}_{\tau}\subset K\subset G_{\tau}$. 
Here $G_{\tau}$ is the Lie subgroup of all fixed points of 
$\tau$ and $G^{\circ}_{\tau}$ the identity component of it.
\item The $G$-invariant semi-Riemannian metric is derived from (a constant multiple of) the Killing form of
$G$. 
\item The Lie algebra $\mathfrak{k}$ of $K$ is \emph{non-degenerate} 
with respect to the induced scalar product.
\end{itemize} 
The resulting homogeneous semi-Riemannian  space 
$G/K$ is called a regular \emph{semi-Riemannian 
$k$-symmetric space}. 
Note that a regular semi-Riemannian $2$-symmetric spaces is just a 
semi-Riemannian symmetric space. 
Since $\mathfrak{k}$ is non-degenerate, 
the orthogonal complement $\mathfrak{p}$ of 
$\mathfrak{k}$ is non-degenerate and 
can be identified with the tangent space of $G/K$ at the origin $o=K$. 
The Lie algebra $\mathfrak{g}$ is decomposed into the direct sum:
\[
\mathfrak{g}=\mathfrak{k}\oplus \mathfrak{p}.
\]
of linear subspaces. 

We denote the induced Lie algebra 
automorphism of $\mathfrak{g}$ by the same letter $\tau$. 
Now we have the eigenspace decomposition of the 
complexified Lie algebra $\mathfrak{g}^{\mathbb C}$;
\[
\mathfrak{g}^{\mathbb C}=\sum_{j\in\mathbb{Z}_k}
\mathfrak{g}_{j}^{\mathbb C},
\]
where $\mathfrak{g}_{j}^{\mathbb C}$ is the eigenspace of 
$\tau$ with eigenvalue $\omega^j$. 
Here $\omega$ is the (primitive) $k$-th root of unity. 
In particular, $\mathfrak{g}^{\mathbb C}_0 =\mathfrak{k}^{\mathbb C}$ 
and $\mathfrak{g}^{\mathbb C}_{-1} = \overline{\mathfrak{g}^{\mathbb C}_1}$.
 Let us define a subbundle $[\mathfrak{g}^{\mathbb C}_j]$ 
 of $G/K\times \mathfrak{g}$ by 
 \[
 [\mathfrak{g}^{\mathbb C}_j]_{g\cdot o}=\mathrm{Ad}(g)
 \mathfrak{g}^{\mathbb C}_j.
 \]
Then the complexified tangent bundle $T^{\mathbb C}G/K$ is 
expressed as 
\[
T^{\mathbb C}\,G/K=\sum_{j\in\mathbb{Z}_k,\,j\not=0}[\mathfrak{g}
^{\mathbb{C}}_j].
\]

\subsection{Primitive maps}\label{sc:primitive}

A smooth map $\psi:\varSigma \to N$ of a Riemann surface $\varSigma$ 
into a semi-Riemannian manifold $N$ 
is said to be a \emph{harmonic map} if its 
\emph{tension field} $\mathrm{tr}(\nabla d\psi)$ vanishes. 

For smooth maps into regular semi-Riemannian $k$-symmetric spaces with $k>2$, 
the notion of primitive map was introduced by Burstall-Pedit~\cite{BP94} (see also 
Bolton-Pedit-Woodward~\cite{BPW}).

\begin{Definition}\label{def:primitivemap}
{\rm Let $\psi:\varSigma\to G/K$ be a smooth map of a Riemann surface 
$\varSigma$ into a regular 
 semi-Riemannian $k$-symmetric space with $k>2$. 
 Then $\psi$ is said to be a \emph{primitive map} if 
 $\mathrm{d}\psi(T^{\prime}\varSigma)\subset 
 [\mathfrak{g}_{_{-1}}^{\mathbb C}]$. Here $T^{\prime}\varSigma$ denotes the 
 $(1,0)$-tangent bundle of $\varSigma$.
 }
 \end{Definition}%
 
Black~\cite{Black} showed that primitive maps are 
\emph{equi-harmonic}, that is, 
harmonic with respect to suitable invariant metrics on $G/K$
(see also~\cite{BP94}). In addition primitive maps well behave
with respect to homogeneous projections~\cite[Theorem 3.7]{BP94}. 
\begin{Theorem}
\label{thm:BP94}
Let $H$ be a closed subgroup of $G$ satisfying 
\begin{itemize}
\item $K\subset H$.
    \item The Lie algebra $\mathfrak{h}$ of $H$ is non-degenerate.
    \item The decomposition $\mathfrak{g}=\mathfrak{h}\oplus\mathfrak{q}$ is reductive 
    and stable under $\tau$. Here $\mathfrak{q}$ is the orthogonal complement of $\mathfrak{h}$.
\end{itemize}
 Denote by $\pi_H:G/K\to G/H$ be the homogenous projection. Then for any primitive map $\psi$, $\pi_{H}\circ \psi$ is a
 harmonic map into $G/H$.
 \end{Theorem}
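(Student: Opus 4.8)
The plan is to reduce the statement to a local computation with a lift of $\psi$ and to verify the harmonic map equation for the projected map directly, the whole argument being driven by the $\mathbb{Z}_k$-grading that $\tau$ induces on $\mathfrak{g}^{\mathbb C}$. First I would choose, on a simply connected patch of $\varSigma$ (harmonicity being local), a lift $F:\varSigma\to G$ with $\psi=\pi_K\circ F$, and set $\alpha=F^{-1}dF=\alpha'+\alpha''$ with $\alpha''=\overline{\alpha'}$ the type decomposition. The primitivity hypothesis $\mathrm{d}\psi(T'\varSigma)\subset[\mathfrak{g}_{-1}^{\mathbb C}]$ then translates into the normal form
\[
\alpha' = \alpha_0' + \alpha_{-1}', \qquad \alpha_0'\in\mathfrak{g}_0^{\mathbb C}=\mathfrak{k}^{\mathbb C},\quad \alpha_{-1}'\in\mathfrak{g}_{-1}^{\mathbb C}.
\]
Because $\mathfrak{h}$ and $\mathfrak{q}$ are $\tau$-stable, the eigenspace decomposition refines as $\mathfrak{g}_j^{\mathbb C}=\mathfrak{h}_j^{\mathbb C}\oplus\mathfrak{q}_j^{\mathbb C}$, and the assumption $K\subset H$ forces $\mathfrak{g}_0^{\mathbb C}=\mathfrak{k}^{\mathbb C}\subset\mathfrak{h}^{\mathbb C}$, i.e.\ $\mathfrak{q}_0^{\mathbb C}=0$. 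Hence the $\mathfrak{q}$-part of the $(1,0)$-form is $A:=\alpha'_{\mathfrak{q}}=(\alpha'_{-1})_{\mathfrak{q}}\in\mathfrak{q}_{-1}^{\mathbb C}$.

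Next I would record the harmonic map equation for the target $G/H$. Since $\mathfrak{q}$ is the $B$-orthogonal complement of $\mathfrak{h}$ (with $B$ the Killing form), invariance of $B$ together with $\mathfrak{h}\perp\mathfrak{q}$ yields $B([X,Y]_{\mathfrak{q}},Z)=B(X,[Y,Z]_{\mathfrak{q}})$ for $X,Y,Z\in\mathfrak{q}$, which is exactly the natural reductivity of the induced metric. Consequently the difference between the Levi--Civita connection and the canonical connection $\nabla^c$ of $G/H$ is the tensor $X,Y\mapsto\tfrac12[X,Y]_{\mathfrak{q}}$, which is skew in $X,Y$, so its symmetric trace vanishes and the two tension fields agree. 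Therefore $\pi_H\circ\psi$ is harmonic if and only if
\[
(d^{\nabla^c})''A=\bar\partial A+[\alpha''_{\mathfrak{h}}\wedge A]_{\mathfrak{q}}=0,
\]
where $\nabla^c$ is the connection on $[\mathfrak{q}^{\mathbb C}]$ induced by $\alpha_{\mathfrak{h}}$ via the isotropy representation. Establishing this equation for the possibly \emph{non-symmetric} reductive target is the one genuinely nontrivial input, and I expect it to be the main obstacle: in the symmetric case $[\mathfrak{q},\mathfrak{q}]\subset\mathfrak{h}$ removes the correction term for free, whereas here it must be controlled through natural reductivity.

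Then I would run the grading bookkeeping. Writing $\alpha''_{\mathfrak{h}}=\alpha''_0+(\alpha''_1)_{\mathfrak{h}}$ with $\alpha''_0\in\mathfrak{g}_0^{\mathbb C}$ and $(\alpha''_1)_{\mathfrak{h}}\in\mathfrak{h}_1^{\mathbb C}$, and using $[\mathfrak{g}_i^{\mathbb C},\mathfrak{g}_j^{\mathbb C}]\subset\mathfrak{g}_{i+j}^{\mathbb C}$ together with $A\in\mathfrak{q}_{-1}^{\mathbb C}$, I find $[(\alpha''_1)_{\mathfrak{h}}\wedge A]\in[\mathfrak{h}_1^{\mathbb C},\mathfrak{q}_{-1}^{\mathbb C}]\subset\mathfrak{g}_0^{\mathbb C}=\mathfrak{k}^{\mathbb C}\subset\mathfrak{h}^{\mathbb C}$, so this term contributes nothing to the $\mathfrak{q}$-part; the same computation shows that the naturally reductive correction $[A\wedge\overline{A}]\in[\mathfrak{q}_{-1}^{\mathbb C},\mathfrak{q}_1^{\mathbb C}]\subset\mathfrak{g}_0^{\mathbb C}\subset\mathfrak{h}^{\mathbb C}$ already had trivial $\mathfrak{q}$-part, confirming there was nothing beyond $(d^{\nabla^c})''A$ to check. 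The harmonic map equation therefore collapses to
\[
\bar\partial A+[\alpha''_0\wedge A]=0.
\]

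Finally I would identify this with a projection of the Maurer--Cartan equation. The $(1,1)$-part of $d\alpha+\tfrac12[\alpha\wedge\alpha]=0$ reads $\bar\partial\alpha'+\partial\alpha''+[\alpha'\wedge\alpha'']=0$. Projecting onto $\mathfrak{g}_{-1}^{\mathbb C}$, the term $\partial\alpha''$ drops (as $\alpha''$ has no $\mathfrak{g}_{-1}^{\mathbb C}$-component) and among the brackets only $[\alpha'_{-1}\wedge\alpha''_0]$ lands in $\mathfrak{g}_{-1}^{\mathbb C}$, leaving $\bar\partial\alpha'_{-1}+[\alpha'_{-1}\wedge\alpha''_0]=0$; taking the $\mathfrak{q}$-component recovers precisely the displayed equation. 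Since $\alpha$ is the Maurer--Cartan form of an honest $G$-valued map, this identity holds automatically, and hence $\pi_H\circ\psi$ is harmonic. In summary, the only place requiring real work is the naturally reductive harmonic map equation of the second step; the remaining reductions are structural consequences of the $\tau$-grading, where brackets of $\mathfrak{q}_{-1}^{\mathbb C}$ with $\mathfrak{q}_1^{\mathbb C}$ or with $\mathfrak{h}_1^{\mathbb C}$ are forced into $\mathfrak{g}_0^{\mathbb C}=\mathfrak{k}^{\mathbb C}\subset\mathfrak{h}^{\mathbb C}$.
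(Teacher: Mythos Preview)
The paper does not actually prove this theorem: it is quoted as \cite[Theorem 3.7]{BP94} and stated without argument in Appendix~A, so there is no ``paper's own proof'' to compare against. Your argument is correct and is essentially the standard one from the cited Burstall--Pedit reference: the key structural facts are exactly those you isolate, namely that $\tau$-stability of $\mathfrak h$ and $\mathfrak q$ gives $\mathfrak g_j^{\mathbb C}=\mathfrak h_j^{\mathbb C}\oplus\mathfrak q_j^{\mathbb C}$, that $K\subset H$ forces $\mathfrak q_0^{\mathbb C}=0$, and that consequently all the ``unwanted'' brackets $[\mathfrak h_1^{\mathbb C},\mathfrak q_{-1}^{\mathbb C}]$ and $[\mathfrak q_1^{\mathbb C},\mathfrak q_{-1}^{\mathbb C}]$ fall into $\mathfrak g_0^{\mathbb C}\subset\mathfrak h^{\mathbb C}$, reducing the harmonic map equation on $G/H$ to the $\mathfrak q_{-1}^{\mathbb C}$-component of the Maurer--Cartan identity. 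Your handling of the naturally reductive correction (showing the Levi--Civita and canonical tension fields agree because the difference tensor $\tfrac12[\,\cdot\,,\,\cdot\,]_{\mathfrak q}$ is skew) is the right way to deal with the fact that $G/H$ need not be symmetric.
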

 Note that when $k=2$, $[\mathfrak{g}^{\mathbb C}_{_{-1}}]=T^{\mathbb C}\,G/K$ and 
 the primitivity condition is vacuous. On the other hand when $k>2$, every
 primitive map is harmonic with respect to the Killing metric. 
 To provide a unified description, we recall the following terminology 
from~\cite{BP95}.
\begin{Definition}
{\rm 
A smooth map $\psi:\varSigma\to G/K$ into a regular 
 semi-Riemannian $k$-symmetric space is said to be a 
\emph{primitive harmonic map} if it is primitive for $k>2$ and harmonic if $k=2$. 
}
\end{Definition}

 Now let $\psi:\mathbb{D} \to G/H$ be a smooth map from a simply connected
 Riemann surface $\mathbb{D}$ into a regular semi-Riemannian $k$-symmetric space $G/K$ with $k\geq 2$. Take a frame $\Psi:\mathbb{D} \to G$ of
 $\psi$ and put $\alpha:=\Psi^{-1}d\Psi$. Then we have the
 identity (\emph{Maurer-Cartan equation}):
\[
d\alpha+\frac{1}{2}[\alpha \wedge \alpha]=0.
\]
 Decompose $\alpha$ along the Lie algebra decomposition
 $\mathfrak{g}=\mathfrak{k}\oplus\mathfrak{p}$ as
\[
\alpha=\alpha_{\mathfrak{k}}+\alpha_{\mathfrak{p}},
\quad 
\alpha_{\mathfrak{k}}\in
\mathfrak{k},
\quad 
\alpha_{\mathfrak{p}}
\in
\mathfrak{p}.
\]
We decompose $\alpha_{\mathfrak{p}}$ 
with respect to the conformal
structure of $\mathbb{D}$ as
\[
\alpha_{\mathfrak p}=
\alpha_{\mathfrak p}^{\prime}+
\alpha_{\mathfrak p}^{\prime \prime}.
\]
 Here $\alpha_{\mathfrak{p}}^{\prime}$ and $\alpha_{\mathfrak p}^{\prime
 \prime}$ are the $(1,0)$ and $(0,1)$-part of $\alpha_{\mathfrak p}$,
 respectively. Since $G$ is a real Lie group, $\alpha^{\prime
 \prime}_{\mathfrak p}$ is the conjugate of $\alpha^{\prime}_{\mathfrak
 p}$. 
 
 Now let us assume that $\psi$ is a 
 primitive harmonic map, then 
 $\alpha_{\mathfrak{p}}^{\prime}$ is $[\mathfrak{g}_{_{-1}}^{\mathbb C}]$-valued
 and $\alpha_{\mathfrak{p}}^{\prime\prime}$ is 
 $[\mathfrak{g}_{_1}^{\mathbb C}]$-valued, respectively.
 Hence the decomposition of $\alpha$ is rewritten as

 \[
 \alpha=\alpha_{-1}^{\prime}+\alpha_{0}+\alpha_{1}^{\prime}.
 \]
Now let us introduce a spectral parameter 
$\lambda\in\mathbb{S}^1$ into $\alpha$ as 
\[
\alpha^{\lambda}:=
\alpha_{0}
+\lambda^{-1}\alpha_{-1}^{\prime}+
\lambda\> \alpha_{1}^{\prime \prime}.
\] 
We arrive at the \emph{zero curvature representation} of primitive harmonic maps:
\begin{Proposition}\label{prop:ZCR}
  Let $\mathbb D$ be a connected open subset of $\mathbb{C}$. Let
 $\psi:\mathbb{D} \to G/K$ be a primitive harmonic map.
 Then the loop of connections $d+\alpha^{\lambda}$ is flat for
 all $\lambda$, that is,
\[
d\alpha^{\lambda}+\frac{1}{2}
[\alpha^{\lambda}\wedge
\alpha^{\lambda}]=0
\]
 for all $\lambda$.

 Conversely assume that $\mathbb{D}$ is simply connected. Let
 $\alpha^{\lambda}=\alpha_{0}+\lambda^{-1} \alpha_{-1}^{\prime}+\lambda\alpha_{1}^{\prime\prime}$ be an
 $\mathbb{S}^1$-family of $\mathfrak{g}$-valued one-forms which satisfies 
\[
d\alpha^{\lambda}+\frac{1}{2}
[\alpha^{\lambda}\wedge
\alpha^{\lambda}]=0
\]
for all $\lambda \in \mathbb{S}^1$.
Then there exists a one-parameter family of maps
$\Psi_{\lambda} :\mathbb{D} \to G$ such that
\[
\Psi_{\lambda}^{-1}
d\Psi_{\lambda}=\alpha^{\lambda},
\]
and 
\[
\psi_{\lambda}=\Psi_{\lambda}\;{\mbox{\rm mod}}\; K:\mathbb{D} \to G/K
\]
is primitive harmonic for all $\lambda$.
\end{Proposition}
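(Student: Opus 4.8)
The plan is to treat both implications through a single computation: expand the curvature
\[
\Omega^{\lambda} := d\alpha^{\lambda}+\tfrac12[\alpha^{\lambda}\wedge\alpha^{\lambda}]
\]
as a Laurent polynomial in $\lambda$ and match its coefficients with the graded pieces of the Maurer-Cartan equation and the (primitive) harmonic map equation. Substituting $\alpha^{\lambda}=\alpha_{0}+\lambda^{-1}\alpha_{-1}^{\prime}+\lambda\,\alpha_{1}^{\prime\prime}$ and using that $\alpha_{-1}^{\prime}$ is of type $(1,0)$ and $\alpha_{1}^{\prime\prime}$ of type $(0,1)$ on the Riemann surface, the self-brackets $[\alpha_{-1}^{\prime}\wedge\alpha_{-1}^{\prime}]$ and $[\alpha_{1}^{\prime\prime}\wedge\alpha_{1}^{\prime\prime}]$ vanish, so the $\lambda^{\pm2}$ terms drop out and $\Omega^{\lambda}=\lambda^{-1}A_{-1}+A_{0}+\lambda\,A_{1}$ with
\begin{align*}
A_{-1}&=d\alpha_{-1}^{\prime}+[\alpha_{0}\wedge\alpha_{-1}^{\prime}],\\
A_{0}&=d\alpha_{0}+\tfrac12[\alpha_{0}\wedge\alpha_{0}]+[\alpha_{-1}^{\prime}\wedge\alpha_{1}^{\prime\prime}],\\
A_{1}&=d\alpha_{1}^{\prime\prime}+[\alpha_{0}\wedge\alpha_{1}^{\prime\prime}].
\end{align*}
Because $\alpha_{1}^{\prime\prime}=\overline{\alpha_{-1}^{\prime}}$ and $\alpha_{0}$ is $\mathfrak g$-valued, one has $A_{1}=\overline{A_{-1}}$; thus $\Omega^{\lambda}\equiv0$ for all $\lambda\in\mathbb{S}^{1}$ if and only if $A_{-1}=A_{0}=0$.

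For the forward implication I would take a frame $\Psi$ of the primitive harmonic map $\psi$ and set $\alpha=\Psi^{-1}d\Psi$, so that the Maurer-Cartan equation $\Omega^{1}=A_{-1}+A_{0}+A_{1}=0$ holds. The term $A_{0}$ is the $\mathfrak g_{0}^{\mathbb C}$-component of this equation (its only $\mathfrak g_{0}^{\mathbb C}$-contributions being $d\alpha_{0}$, $\tfrac12[\alpha_{0}\wedge\alpha_{0}]$ and $[\alpha_{-1}^{\prime}\wedge\alpha_{1}^{\prime\prime}]$), hence $A_{0}=0$. When $k>2$, primitivity forces the $(1,0)$-part of $\alpha_{\mathfrak p}$ to be $[\mathfrak g_{-1}^{\mathbb C}]$-valued, so the grading $[\mathfrak g_{i}^{\mathbb C},\mathfrak g_{j}^{\mathbb C}]\subset\mathfrak g_{i+j}^{\mathbb C}$ separates the Maurer-Cartan equation into its $\mathfrak g_{0}^{\mathbb C},\mathfrak g_{-1}^{\mathbb C},\mathfrak g_{1}^{\mathbb C}$ components, and $A_{-1}$ is exactly the $\mathfrak g_{-1}^{\mathbb C}$-component, whence $A_{-1}=0$. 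When $k=2$ the eigenspaces collapse ($\mathfrak g_{-1}^{\mathbb C}=\mathfrak g_{1}^{\mathbb C}$) and the $\mathfrak p$-component of the Maurer-Cartan equation supplies only the Codazzi relation $A_{-1}+A_{1}=0$; here I would invoke harmonicity of $\psi$, whose framed formulation $d^{\nabla}\!\ast\alpha_{\mathfrak p}=0$ reads $A_{-1}=A_{1}$, and combined with Codazzi this forces $A_{-1}=A_{1}=0$. In either case $A_{-1}=A_{0}=0$, so $\Omega^{\lambda}\equiv0$.

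For the converse, fix $\lambda\in\mathbb{S}^{1}$. Since $\alpha^{\lambda}$ is $\mathfrak g$-valued, $d+\alpha^{\lambda}$ is flat, and $\mathbb D$ is simply connected, the standard integrability theorem for flat connections produces $\Psi_{\lambda}:\mathbb D\to G$, unique once $\Psi_{\lambda}(z_{*})=\id$ is imposed, with $\Psi_{\lambda}^{-1}d\Psi_{\lambda}=\alpha^{\lambda}$; this can be carried out with smooth dependence on $\lambda$. Put $\psi_{\lambda}=\Psi_{\lambda}\ \mathrm{mod}\ K$. The $(1,0)$-part of $(\alpha^{\lambda})_{\mathfrak p}$ equals $\lambda^{-1}\alpha_{-1}^{\prime}$, which is $[\mathfrak g_{-1}^{\mathbb C}]$-valued, so $d\psi_{\lambda}(T^{\prime}\mathbb D)\subset[\mathfrak g_{-1}^{\mathbb C}]$ and $\psi_{\lambda}$ is primitive for $k>2$. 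By hypothesis $\Omega^{\lambda}\equiv0$ for all $\lambda\in\mathbb{S}^{1}$; since $\Omega^{\lambda}$ is a Laurent polynomial in $\lambda$ with $\lambda$-independent coefficients vanishing on the infinite set $\mathbb{S}^{1}$, every coefficient vanishes, in particular $A_{-1}=A_{1}=0$. For $k>2$ this is Black's theorem turning primitivity into harmonicity, while for $k=2$ the identities $A_{-1}=A_{1}=0$, together with the automatic Codazzi relation coming from the Maurer-Cartan equation of $\Psi_{\lambda}$, are exactly the harmonic map equation for $\psi_{\lambda}$. Hence $\psi_{\lambda}$ is primitive harmonic for every $\lambda$.

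The main obstacle is the uniform treatment of the degenerate case $k=2$ alongside $k>2$. For $k>2$ the primitivity condition aligns the three curvature coefficients $A_{-1},A_{0},A_{1}$ with distinct eigenspace components of the Maurer-Cartan equation, so flatness is essentially automatic; but for $k=2$ the two eigenspaces $\mathfrak g_{\pm1}^{\mathbb C}$ coincide, the Maurer-Cartan equation supplies only the combination $A_{-1}+A_{1}$, and one must separately recognize the tension-field equation for a map into the semi-Riemannian symmetric space $G/K$ with the Killing metric. Pinning down the $\bar\partial^{\nabla}$-formulation of harmonicity, and checking that the Hodge-$\ast$ conventions identify $d^{\nabla}\!\ast\alpha_{\mathfrak p}=0$ with $A_{-1}=A_{1}$, which together with Codazzi yields $A_{-1}=0$, is the one step demanding genuine care; the remaining ingredients are the type argument eliminating the $\lambda^{\pm2}$ terms and the elementary fact that a Laurent polynomial vanishing on $\mathbb{S}^{1}$ is identically zero.
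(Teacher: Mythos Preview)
Your argument is correct and is the standard one for this well-known zero-curvature characterization. Note, however, that the paper does not actually supply a proof of this proposition: it is stated in the appendix as a known result from the harmonic-maps literature (essentially Burstall--Pedit), so there is no proof in the paper to compare yours against.

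One small simplification: in the converse direction for $k>2$ you invoke Black's theorem to deduce harmonicity from primitivity, but by the paper's own definition a \emph{primitive harmonic map} for $k>2$ is simply a primitive map, which you have already verified directly from the form of $(\alpha^{\lambda})_{\mathfrak p}^{\prime}=\lambda^{-1}\alpha_{-1}^{\prime}$; no further appeal is needed. Everything else---the type argument killing the $\lambda^{\pm 2}$ terms, the eigenspace separation of $A_{-1},A_0,A_1$ for $k>2$, and the Codazzi-plus-tension-field combination yielding $A_{-1}=0$ for $k=2$---is exactly right.
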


\section{Projective minimal surfaces and the conformal Gauss maps}
 \label{sc:confGauss}

\subsection{Conformal Gauss map}
Let $\mathfrak f: M \to \mathbb{R}\mathbb{P}^3$ be a surface with
 Wilczynski frame $F$ as in subsection \ref{subsc:Gaussmap}. 
 We define a map $\gs$ by
 \[
 \gs = \hat{F} \hat{J}_{2} \hat{F}^{\mathrm T}
 = -\mathrm{Ad}(L)(F J_2 F^{\mathrm T}),
 \quad 
 \hat{J}_{2}=-LJ_{2}L^{\mathrm T},
 \]
 where $\js = \offdi (1, -1, -1, 1)$ 
 (\textit{cf}.~\cite[\S 4.1]{SasakiRokko}).
 Analogous to the first-order Gauss map $\gf$, 
 $\gs$ takes value in the space $\mathcal{Q}$ of conformal 
 $2$-spheres in $\mathbb{R}\mathbb{P}^3$. More precisely,
since the matrix $\hat{J}_2$ is of signature $(1,3)$, it is a point of $\mathcal{Q}$. Thus $\mathcal{Q}$ is realized as a
homogeneous space $\mathrm{SL}_{4}\mathbb{R}/\hat{K}_2$, where 
$\hat{K}_2$ is the stabilizer at $\hat{\js} \in \mathcal{Q}$ explicitly given by
 $\hat{K}_2 = \{ X \in \SL \;|\; X \js X^{\mathrm T} = \js\}$, which 
 is also isomorphic to ${\rm SO}^{+}_{1, 3}$. 
 Thus the map $\gs$ takes value in $\mathrm{SL}_{4}\mathbb{R}/\hat{K}_2$:
 \begin{equation}\label{eq:Gaussmap2}
 \gs : M \to \mathcal{Q} \cong \mathrm{SL}_{4}\mathbb{R}/\hat{K}_2= \mathrm{SL}_{4}\mathbb{R}/\mathrm{SO}^{+}_{1,3}.
 \end{equation}
 This map $\gs$ is known to be a \emph{Lie quadric} which has 
 the second order contact to the surface, 
 see~\cite[Section 18]{Lane}.
 The map $\gs$ has been called the 
 \emph{conformal Gauss map} for a surface $\mathfrak f$
 in $\mathbb{R}\mathbb{P}^3$, see~\cite{Th, BHJ}. 
 In~\cite{MN}, the conformal Gauss map $\gs$ was called 
 the \emph{projective Gauss map}. 
 In classical literature, $\gs$ was called the 
 \emph{congruence of Lie quadrics}.

\begin{Proposition}[Theorem 3 in~\cite{BHJ}]\label{prop:conformality2}
 The conformal Gauss map $\gs$ is conformal map.
\end{Proposition}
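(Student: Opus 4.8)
The plan is to mimic the proof of Proposition~\ref{prop:conformality} for the first-order Gauss map, but now with the matrix $\hat J_2$ (equivalently $J_2 = \offdi(1,-1,-1,1)$) in place of $\hat J_1$. First I would compute the derivatives $\partial_z \gs$ and $\partial_{\bar z} \gs$ explicitly. Since $\gs = -\operatorname{Ad}(L)(F J_2 F^{\mathrm T})$ and $F_z = FU$, $F_{\bar z} = FV$, we get
\[
\partial_z \gs = -\operatorname{Ad}(L)\bigl(F(UJ_2 + J_2 U^{\mathrm T})F^{\mathrm T}\bigr),
\quad
\partial_{\bar z}\gs = -\operatorname{Ad}(L)\bigl(F(VJ_2 + J_2 V^{\mathrm T})F^{\mathrm T}\bigr),
\]
so everything reduces to computing the symmetric matrices $UJ_2 + J_2U^{\mathrm T}$ and $VJ_2 + J_2V^{\mathrm T}$ from the explicit forms of $U$ and $V$ in Section~\ref{sc:Wilczynski}. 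This is a routine but slightly tedious $4\times 4$ calculation; the diagonal entries of $U$ (the terms $\pm \bar b_z/(2\bar b)$) will cancel against the corresponding entries coming through $J_2$, and the off-diagonal structure of $J_2$ reorders rows and columns.

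Next I would form the pairings using $\langle X, Y\rangle_Q = \operatorname{Tr}(Q^{-1}XQ^{-1}Y)$ at $Q = \gs$; because $\operatorname{Ad}(L)$ is an isometry and $\hat J_2 = -LJ_2L^{\mathrm T}$, the trace computation can be carried out with the concrete matrices $F J_2 F^{\mathrm T}$, its inverse, and the symmetrized matrices above, and $F$ drops out of the trace entirely. The key point I expect to find is that the $(1,0)$–$(1,0)$ pairing $\langle \partial_z\gs, \partial_z\gs\rangle$ is proportional to $b$ rather than to $P$: unlike $\jf$, the matrix $\js$ is built so that the conformal Gauss map is the \emph{Lie} quadric with second-order contact, and the conformality of $\gs$ should be an unconditional identity. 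So the computation should yield $\langle \partial_z\gs,\partial_z\gs\rangle = 0$ and $\langle \partial_{\bar z}\gs,\partial_{\bar z}\gs\rangle = 0$ identically, with only the mixed pairing $\langle \partial_z\gs,\partial_{\bar z}\gs\rangle$ nonzero.

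Finally, since $(z,\bar z)$ are null coordinates for the conformal structure on $M$ induced by $\mathfrak f$ (the projective metric is $8|b|^2\,dzd\bar z$), the vanishing of $\langle \partial_z\gs,\partial_z\gs\rangle$ and $\langle \partial_{\bar z}\gs,\partial_{\bar z}\gs\rangle$ together with the nonvanishing of the mixed term is exactly the statement that $\gs$ is a conformal map, up to checking that $\gs$ is an immersion where $b \neq 0$ (which holds by our standing assumption $b \neq 0$). The main obstacle is purely bookkeeping: getting the signs right in $UJ_2 + J_2U^{\mathrm T}$ given the off-diagonal $\pm 1$ pattern of $J_2$, and confirming that the would-be $P$-dependent terms genuinely cancel. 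I would double-check this by comparing against the parallel computation in Proposition~\ref{prop:conformality}, where the analogous symmetrized matrices $UJ_1 + J_1U^{\mathrm T}$ were displayed explicitly; replacing $J_1$ by $J_2$ only flips two signs, and tracking those flips through the trace gives the result.
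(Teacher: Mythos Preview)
Your proposal is correct and follows essentially the same route as the paper's own proof: compute $\partial_z\gs$ and $\partial_{\bar z}\gs$ via the symmetrized matrices $UJ_2+J_2U^{\mathrm T}$ and $VJ_2+J_2V^{\mathrm T}$ (which the paper records explicitly as $2\,\di(b\bar P,0,-b,0)$ and $2\,\di(\bar bP,-\bar b,0,0)$), evaluate the trace pairings to get $\langle\partial_z\gs,\partial_z\gs\rangle=\langle\partial_{\bar z}\gs,\partial_{\bar z}\gs\rangle=0$ and $\langle\partial_z\gs,\partial_{\bar z}\gs\rangle=4|b|^2\neq 0$, and conclude conformality. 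The only wrinkle is your phrasing that the $(1,0)$--$(1,0)$ pairing is ``proportional to $b$'': the derivative \emph{matrix} involves $b$ and $b\bar P$, but the pairing itself vanishes identically, as you go on to state.
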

\begin{proof}
 As in the proof of Proposition \ref{prop:conformality},
 a direct computation shows that
\[
 \partial_{z}\gs = -2 (LF) \di (b \bar{P}, 0, -b, 0) (LF)^{\mathrm T}
 \;\;\mbox{and}\;\;
 \partial_{\bar z}{\gs}= -2 (LF) \di (\bar{b} P, -\bar{b}, 0, 0) (LF)^{\mathrm T}.
\]
 Thus 
\[
\langle \partial_{z}\gs, \partial_{z}\gs \rangle = \langle  \partial_{\bar z}{\gs},  \partial_{\bar z}{\gs}\rangle  = 0
\;\; \mbox{and} \;\;
\langle  \partial_{z}\gs,  \partial_{\bar z} \gs\rangle = 
4 |b|^2  \neq  0.
\]
 Since the coordinates $(z, \bar{z})$ are null for the conformal 
 structure induced by $\mathfrak f$, 
 the conformal Gauss map $\gs$ is conformal.
\end{proof}

\begin{Remark}
The \emph{Hodge star operator} $\star$ on $\wedge^{2}
\mathbb{R}^{1,3}$ is introduced by 
\[
\langle a,b\rangle=\Omega(a\wedge \star b).
\]
Since $\langle\cdot,\cdot\rangle$ is Lorentzian, 
$\star$ satisfies $\star^2=-1$. Thus 
the complexification $(\wedge^{2}\mathbb{R}^{1,3})
^{\mathbb C}\cong \wedge^{2}
\mathbb{C}^{1,3}$ has the eigenspace decomposition
\[
(\wedge^{2}\mathbb{R}^4)
^{\mathbb C}=\mathrm{S}\oplus\overline{\mathrm{S}},
\]
where $\mathrm{S}$ is the $\sqrt{-1}$-eigenspace of $\star$.
In this way, a quadric $Q\in\mathcal{Q}$ corresponds to a complex linear subspace 
$\mathrm{S}$ of $(\wedge^{2}\mathbb{R}^4)
^{\mathbb C}$. The correspondence $Q\longmapsto
\mathrm{S}$ defines a 
smooth bijection from the space $\mathcal{Q}$ of conformal $2$-spheres in $\mathbb{R}\mathbb{P}^3$ 
to the space 
\[
\mathcal{G}_{2,0}^{3,3}=
\{\mathrm{S}\subset (\mathbb{R}^{3,3})^{\mathbb C}\>|\>
\mathrm{S}\cap \mathrm{S}^{\perp}=\{0\},\ \ 
\bar{\mathrm{S}}=\mathrm{S}^{\perp}\}.
\]
Under this identification, $g_2$ is regarded as 
a smooth map into $\mathcal{G}_{2,0}^{3,3}$ in~\cite[p.~183]{BHJ}, \cite[p.~30]{Clarke}.
\end{Remark}

\subsection{Projective minimal surfaces and the conformal Gauss maps}
The space $\mathcal{Q}$ of conformal $2$-spheres in 
$\mathbb{R}\mathbb{P}^3$ is isomorphic to 
the semi-Riemannian symmetric space $G/K_2$, where 
\[
K_2=\{a\in G\>|\>aJ_2a^{\mathrm T}=J_2\}.
\]
 Let $\tau_2$ be the outer involution on 
 $G$ associated to the symmetric 
 space $G/K_2$ defined by: 
\[
 \tau_2 (a) = \js \left(a^{\mathrm T}\right)^{-1}\js ,
\] 
 where $a \in G$. By abuse of notation, we denote the 
 differential of $\tau_2$ by the same letter $\tau_2$  
 which is an outer involution on $\mathfrak{g}$:
\begin{equation}\label{eq:tau1}
\tau_2 (X) = - \js X^{\mathrm T} \js,
\end{equation}
 where $X \in \mathfrak{g}$.
 Let us consider the eigenspace decomposition
 of $\mathfrak g$ with respect to $\tau_2$, that 
 is, $\mathfrak g = \mathfrak k_2 \oplus \mathfrak p_2$,
 where $\mathfrak k_2$ is the $(+1)$-eigenspace 
 and $\mathfrak p_2$ is the $(-1)$-eigenspace as follows:
\[
 \mathfrak k_2 =\left\{
\begin{pmatrix}
 a_{11} & a_{12} & a_{13} & 0\\
 a_{21} & a_{22} & 0 &  a_{13}\\
 a_{31} & 0 & -a_{22} & a_{12}\\
 0 & a_{31} & a_{21} & -a_{11}
\end{pmatrix}
\in\mathfrak{g}
\right\}, \quad
 \mathfrak p_2 =
\left\{
\begin{pmatrix}
 a_{11} & a_{12} & a_{13} & a_{14}\\
 a_{21} & -a_{11} & a_{23} &  -a_{13}\\
 a_{31} & a_{32} & -a_{11} & -a_{12}\\
 a_{41} & -a_{31} & -a_{21} & a_{11}
\end{pmatrix}
\in\mathfrak{g}
\right\}.
\]
 According to this decomposition  
 $\mathfrak g = \mathfrak k_2 \oplus \mathfrak p_2$, 
 the Maurer-Cartan form $\alpha = F^{-1} d F = U dz + Vd\bar{z}$
 can be decomposed into  
\[
 \alpha = \alpha_{\mathfrak k_2} + \alpha_{\mathfrak p_2} =
 U_{\mathfrak k_2}dz + V_{\mathfrak k_2}d\bar{z} + U_{\mathfrak p_2}dz + 
 V_{\mathfrak p_2}d\bar{z},
\]
 where $U = U_{\mathfrak k_2} + U_{\mathfrak p_2}$ and
 $V = V_{\mathfrak k_2} + V_{\mathfrak p_2}$.
 Let us insert the parameter $\lambda \in \mathbb S^1$ 
 into $U$ and $V$ in a manner similar to section 
 \ref{subsection6.2}:
\[
 U^{\lambda}= U_{\mathfrak k_2} + \lambda^{-1} U_{\mathfrak p_2} \;\;\mbox{and}  \;\;
 V^{\lambda} = V_{\mathfrak k_2} + \lambda V_{\mathfrak p_2}.
\]
 Then a family of $1$-forms $\alpha^{\lambda}$ 
 is defined as follows:
\begin{equation}\label{eq:alphalambda1} 
 \alpha^{\lambda} = 
 \alpha_{\mathfrak k_2} + \lambda^{-1} \alpha_{\mathfrak p_2}^{\prime}
 + \lambda \alpha_{\mathfrak p_2}^{\prime \prime} 
 = U^{\lambda} dz + V^{\lambda} d\bar{z}.
\end{equation}
 In fact the matrices $U^{\lambda}$ 
 and $V^{\lambda}$ are explicitly given 
 as follows:
\begin{equation}\label{eq:extendedUV1}
 U^{\lambda}= 
 \begin{pmatrix}
 \frac{\bar{b}_z}{2 \bar{b}} & P & k  &  \lambda^{-1}b \bar{P}\\[0.1cm]
 1  & -\frac{\bar{b}_z}{2 \bar{b}} & 0 & k \\[0.1cm]
 0&\lambda^{-1} b&\frac{\bar{b}_z}{2 \bar{b}} & P \\[0.1cm]
 0 & 0 & 1 &-\frac{\bar{b}_z}{2 \bar{b}}
 \end{pmatrix}, \quad
 V^{\lambda}= 
 \begin{pmatrix}
 \frac{b_z}{2 b} & \bar{b} &\bar{P}  & \lambda \bar{b} P \\[0.1cm]
 0  & \frac{b_z}{2 b} & \lambda \bar{b} & \bar{P} \\[0.1cm]
 1&0& -\frac{b_z}{2 b} & \bar{k} \\[0.1cm]
 0 & 1 & 0 &-\frac{b_z}{2 b}
 \end{pmatrix}.
\end{equation}
 Then the projective minimal surface can be characterized 
 by the harmonicity of the conformal Gauss map~\cite{Th}, \cite[Theorem 7]{BHJ},
 and by a family of flat connections.
\begin{Theorem}[\cite{Th}, \cite{BHJ}]\label{thm:projmini}
 Let $\mathfrak f$ be a surface in $\mathbb{R}\mathbb{P}^3$ and $\gs$ the 
 conformal Gauss map defined in \eqref{eq:Gaussmap2}.
 Moreover, let 
 $\{\alpha^{\lambda}\}_{\lambda
 \in \mathbb S^{1}}$  be a family of $1$-forms defined in 
 \eqref{eq:alphalambda1}. Then 
 the following are mutually equivalent$:$
\begin{enumerate}
\item[$1.$]  The surface $\mathfrak f$ is a projective minimal surface. 
\item[$2.$]  The conformal Gauss map $\gs$ 
 is a conformal harmonic map into $\mathcal{Q}$.
\item[$3.$]  $\{\alpha^{\lambda}\}_{\lambda
 \in \mathbb{S}^{1}}$ is a family of flat connections 
 on $\mathbb{D} \times G$.
\end{enumerate}
\end{Theorem}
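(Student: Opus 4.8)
The plan is to prove the three-fold equivalence as a cycle, establishing $1 \Leftrightarrow 3$ by a direct zero-curvature computation with the explicit matrices in \eqref{eq:extendedUV1}, and $2 \Leftrightarrow 3$ by combining the general transfer principle of Proposition \ref{prop:ZCR} with the unconditional conformality of $\gs$ recorded in Proposition \ref{prop:conformality2}. This mirrors the architecture of the proof of Theorem \ref{thm:Demoulin}, but with the involution $\tau_1$ and the splitting $\mathfrak g = \mathfrak k_1 \oplus \mathfrak p_1$ replaced throughout by $\tau_2$ and $\mathfrak g = \mathfrak k_2 \oplus \mathfrak p_2$, since here the relevant symmetric space is $\mathcal Q = G/K_2$.

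For $1 \Leftrightarrow 3$, I would write $U^{\lambda} = U_{\mathfrak k_2} + \lambda^{-1} U_{\mathfrak p_2}$ and $V^{\lambda} = V_{\mathfrak k_2} + \lambda V_{\mathfrak p_2}$ as in \eqref{eq:extendedUV1} and expand the flatness $d\alpha^{\lambda} + \frac{1}{2}[\alpha^{\lambda} \wedge \alpha^{\lambda}] = 0$ as a Laurent polynomial in $\lambda$. Since $U^{\lambda}$ carries only the powers $\lambda^{0}, \lambda^{-1}$ and $V^{\lambda}$ only $\lambda^{0}, \lambda^{1}$, the cross bracket $[U_{\mathfrak p_2}, V_{\mathfrak p_2}]$ lands in degree $\lambda^{0}$, so the coefficients are nontrivial only in degrees $\lambda^{-1}, \lambda^{0}, \lambda^{1}$. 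Requiring each coefficient to vanish separately, and comparing with the equation obtained at $\lambda = 1$ (which is the ordinary compatibility of the Wilczynski frame, namely the projective Gauss--Codazzi system \eqref{eq:comp1}--\eqref{eq:comp2}), I expect the full system to reduce to the Codazzi equation \eqref{eq:comp1} together with the complex projective minimality equation \eqref{eq:projmin}. Since every genuine surface automatically satisfies \eqref{eq:comp1}--\eqref{eq:comp2}, and since \eqref{eq:projmin} already implies \eqref{eq:comp2} as noted after \eqref{eq:projmin}, the only content imposed by flatness for all $\lambda$ beyond what holds for every surface is precisely \eqref{eq:projmin}, i.e. projective minimality. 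This gives $1 \Leftrightarrow 3$.

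For $2 \Leftrightarrow 3$, I would observe that the family \eqref{eq:alphalambda1} is exactly the deformation $\alpha^{\lambda} = \alpha_{\mathfrak k_2} + \lambda^{-1}\alpha_{\mathfrak p_2}^{\prime} + \lambda\,\alpha_{\mathfrak p_2}^{\prime\prime}$ attached by the involution $\tau_2$ to the semi-Riemannian symmetric space $\mathcal Q = G/K_2$; thus, in the notation of Proposition \ref{prop:ZCR} with $k = 2$ (where primitivity is vacuous, so ``primitive harmonic'' means simply ``harmonic''), the frame $F$ of $\gs$ has Maurer--Cartan form whose extension is \eqref{eq:alphalambda1}. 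Proposition \ref{prop:ZCR} then yields directly that $\gs$ is harmonic into $\mathcal Q$ if and only if $\{d + \alpha^{\lambda}\}$ is flat for all $\lambda$. Finally, Proposition \ref{prop:conformality2} shows that $\gs$ is conformal with no hypothesis on $\mathfrak f$, so harmonicity and conformal harmonicity of $\gs$ coincide, completing $2 \Leftrightarrow 3$.

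The main obstacle is the bookkeeping in $1 \Leftrightarrow 3$: one must check that the degree $\lambda^{\pm 1}$ coefficients, together with the modified degree-$\lambda^{0}$ coefficient in which the $\mathfrak p_2$-bracket $[U_{\mathfrak p_2}, V_{\mathfrak p_2}]$ has been isolated, combine to give exactly \eqref{eq:comp1} and \eqref{eq:projmin} with no spurious extra constraints. Concretely, I would verify that the off-diagonal entries carrying the factors $b\bar P$, $b$, $\bar b P$, $\bar b$ in \eqref{eq:extendedUV1} produce the real part of $\bar b P_{z} + 2\bar b_{z} P$ in the degree-one coefficients -- the part not already guaranteed by \eqref{eq:comp2} -- thereby upgrading the imaginary condition \eqref{eq:comp2} to the full minimality equation \eqref{eq:projmin}. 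The remaining identifications are routine matrix computations.
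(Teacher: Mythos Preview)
Your proposal is correct and follows essentially the same route as the paper: a direct flatness computation of $d\alpha^{\lambda}+\tfrac12[\alpha^{\lambda}\wedge\alpha^{\lambda}]$ for $1\Leftrightarrow 3$, and an appeal to Proposition~\ref{prop:ZCR} (with $k=2$) together with the automatic conformality of Proposition~\ref{prop:conformality2} for $2\Leftrightarrow 3$. The only cosmetic difference is that the paper organizes the $1\Leftrightarrow 3$ step by matrix entry---observing that all entries except the $(1,4)$-entry reproduce \eqref{eq:comp1}, while the $\lambda^{\pm1}$-parts of the $(1,4)$-entry yield \eqref{eq:projmin} and its conjugate---whereas you organize it by Laurent degree in $\lambda$; these are the same computation viewed two ways.
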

\begin{proof}
 Let us compute the flatness conditions of $d + \alpha^{\lambda}$, 
 that is, the Maurer-Cartan equation $ d \alpha^{\lambda} + 
 \tfrac{1}{2}[\alpha^{\lambda} \wedge \alpha^{\lambda}] =0$. 
 It is easy to see that except for the $(1, 4)$-entry, the 
 Maurer-Cartan equation is equivalent to \eqref{eq:comp1}.
 Moreover, the $\lambda^{-1}$-term and the $\lambda$-term 
 of the $(1, 4)$-entry are equivalent to 
 that the first equation and the second equation 
 in \eqref{eq:projmin}, respectively. 
 Thus the equivalence of $(1)$ and $(3)$ follows.
 
 The equivalence of $(2)$ and $(3)$ follows from 
 Proposition \ref{prop:ZCR}, since the family of 
 $1$-forms $\alpha^{\lambda}$ is given 
 by the involution $\tau_2$ and it defines 
 the semi-Riemannian symmetric space $\mathcal{Q}=\mathrm{SL}_{4}\mathbb{R}/K_2$.
\end{proof}
\begin{Remark}
{\rm  The above theorem implies that if $\mathfrak f$ 
 is a projective 
 minimal surface, then there exists a family of projective 
 minimal surface $\mathfrak f^{\lambda}\> (\lambda \in \mathbb S^1)$
 such that $\mathfrak f^{\lambda}|_{\lambda =1}=\mathfrak f$. 
 Projective minimal surfaces of 
 the family have the same projective metric 
 $8|b|^2\,dz d\bar{z}$
 but the different conformal classes of 
 cubic forms $\lambda^{-1} b \>dz^3$. 
 }
\end{Remark}


\begin{thebibliography}{99}

\bibitem{Black}
M.~Black, 
\emph{Harmonic maps into Homogeneous Spaces}, 
Pitman Research Notes in Mathematics Series \textbf{255}, 1991.

\bibitem{BPW}
J.~Bolton, 
F.~Pedit, 
L.~M.~Woodward, 
Minimal surfaces and the affine Toda field model, 
\emph{J. Reine Angew. Math.} \textbf{459} (1995), 119--150.

\bibitem{BW1992}
J.~Bolton, 
L.~M.~Woodward, 
Congruence theorems for harmonic maps 
from a Riemann surface into $\mathbb{C}\mathrm{P}^n$ and 
$\mathrm{S}^n$, 
\emph{J. London Math. Soc.} (2) \textbf{45} (1992), 363--376.

\bibitem{BHJ}
F.~E.~Burstall, 
U.~Hertrich-Jeromin, 
Harmonic maps in unfashionable geometries, 
\emph{Manuscripta. Math.} \textbf{108} (2002), 
171--189.

\bibitem{BP94}
F.~E.~Burstall, 
F.~Pedit, 
Harmonic maps via Adler-Kostant-Symes theory, in:  
\emph{Harmonic Maps and Integrable
Systems} (A.~P.~Fordy, J.~C.~Wood eds.), 
Aspects of Mathematics \textbf{E23} (1994), 
Vieweg, pp.~221--272.



\bibitem{BP95} 
F.~E.~Burstall, 
F.~Pedit, 
Dressing orbits of harmonic maps, 
\emph{Duke Math. J.} \textbf{80} (1985), 353--382.

\bibitem{Clarke}
D.~J. ~Clarke, 
\emph{Integrability in Submanifold Geometry}, 
Thesis, Univ. Bath., 2012.

\bibitem{CW}
S.~S.~Chern, J.~G.~Wolfson, 
Harmonic maps of the two-sphere
into a complex Grassmannian manifolds II,
\emph{Ann. of Math.} (2) \textbf{125} (1987), 301--335.

\bibitem{Darboux}
G.~Darboux, 
\emph{Le\c{c}ons sur la 
th{\'e}orie g{\'e}n{\'e}rale des surfaces} 
I, 1914, II, 1915, second edition, Gauthier-Villars. 


\bibitem{Demoulin}
A.~Demoulin, 
Sur deux transformations des surfaces dont 
les quadriques de Lie n'ont que deux ou trois
points caracteristiques, 
\emph{Bull de l'Acad Belgique} \textbf{19} (1933), 
479 --502, 579--592, 1352--1363.

\bibitem{DFKW}
J.~F.~Dorfmeister, 
W.~Freyn, 
S.-P.~Kobayashi, E.~Wang, 
Survey on real forms of the complex 
$A_{2}^{(2)}$-Toda equation and surface theory.
\emph{Complex Manifolds} \textbf{6} (2019), 194--227.

\bibitem{DMPW}
J.~Dorfmeister, 
I.~McIntosh, 
F.~Pedit, 
H.~Wu, On the meromorphic potential for a harmonic surface
in a $k$-symmetric space, 
\emph{Manuscripta Math.} \textbf{92} (1997) no.~2, 143--152.

\bibitem{Ferapontov}
F.~E.~Ferapontov, 
Integrable systems in projective differential geometry, 
\emph{Kyushu J. Math.} \textbf{54} (2000), no.~1, 183--215.

\bibitem{Hu}
H.~S.~Hu, 
Darboux transformations of Su-chain, in: 
\emph{Differential Geometry. Proceedings of the Symposium in 
honor of Professor Su Buchin on his 90th Birthday} 
(C.~H.~Gu, H.~S.~Hu, Y.~ L.~Xin eds.), 
World Scientific, 1993, pp.~325--380.

\bibitem{Kobayashi2015} 
 S.-P.~Kobayashi, 
 A loop group method for Demoulin surfaces in the 3-dimensional real projective space, 
 \emph{Differential Geom. Appl.}, \textbf{40} (2015), 57--66 
(\texttt{arXiv:1301.6325v2[math.DG]})%

\bibitem{Lane}
 E.~P.~Lane, 
 \emph{Projective Differential Geometry of Curves and Surfaces}, 
 University of Chicago Press, 1932.
 
\bibitem{LM}
C.~LeBrun, 
L.~J.~Mason, 
Nonlinear gravitons, null geodesics, 
and holomorphic disks, 
\emph{Duke Math. J.} \textbf{136} (2007), 205--273.

\bibitem{MN}
E.~Musso, 
L.~Nicolodi, 
Tableaux over Lie algebras, integrable systems, and 
classical surface theory, \emph{Comm. Anal. Geom.} 
\textbf{14} (2006), 475--496.

\bibitem{NS}
K.~Nomizu, T.~Sasaki, 
Centroaffine immersions of codimension two and projective 
hypersurface theory, 
\emph{Nagoya Math. J.} \textbf{132} (1993), 63--90.

\bibitem{NSBook}
K.~Nomizu, T.~Sasaki, 
\emph{Affine Differential 
Geometry. Geometry of 
Affine Immersions}, 
Cambridge Tracts in Mathematics \textbf{11} (1994), 
Cambridge University Press.

\bibitem{Pa}
C.~Pa (Zhengguo Bai), 
A new definition of the Godeaux sequence of quadrics, 
\emph{Amer. J. Math.} \textbf{69} (1947), 117--120.



\bibitem{Pa1956}
C.~Pa (Zhengguo Bai), 
On the surfaces whose asymptotic curves of one system are projectively equivalent,
\emph{Univ. Nac. Tucum{\'a}n Revista} \textbf{A3} (1942), 341--349.


\bibitem{Pick}
G.~Pick, 
{\"U}ber affine Geometie IV. 
Differentialinvarianten der Fl{\"a}chen gegen{\"u}ber affinen 
Transformationen,  
\emph{Berichte Verh Ges. Wiss. Leibzig} \textbf{69} (1917), 107--136.
 
\bibitem{SasakiRokko}
T.~Sasaki, 
\emph{Projective Differential Geometry and 
Linear Homogenous 
Differential Equations}, 
Rokko Lectures in Math. \textbf{5} (1999), Kobe Univ.


\bibitem{Sasaki}
T.~Sasaki, 
Line congruence and transformation of projective surfaces, 
\emph{Kyushu J. Math.} \textbf{60} (2006), 101--243.

\bibitem{Su1936}
B.~Su, 
On certain periodic sequence of Laplace of period 
four in ordinary space,
\emph{Sci. Rep. Tohoku Imperial Univ.} \textbf{25} (1936), 227--256.

\bibitem{Su1964}
B.~Su, 
On certain couples of closed Laplace sequences of period four in ordinary space, 
\emph{Chinese Math.}  \textbf{5} (1964), 151--174.

\bibitem{Su1983}
B.~Su, 
The growth and development of differential geometry in China. (Japanese)
\emph{S{\=u}gaku} \textbf{35} (1983), no.~3, 221--228.


\bibitem{Th}
G.~Thomsen, 
Sulle superficie minime proiettive, 
\emph{Annali Mat.} \textbf{5} (1928), 169--184

\bibitem{Wil1}
E.~J.~Wilczynski, 
\emph{Projective Differential Geometry of Curves 
and Ruled Surfaces}, Teubner, 1906.


\end{thebibliography}
\end{document}